\newtheorem{theorem}{Theorem}[subsection]
\newtheorem{proposition}[theorem]{Proposition}
\theoremstyle{definition}
\theoremstyle{remark}
\newtheorem{remark}[theorem]{Remark}
\newcommand{\FF}{\mathbb{F}}
\newcommand{\ZZ}{\mathbb{Z}}
\newcommand{\QQ}{\mathbb{Q}}
\newcommand{\LL}{\mathbb{L}}
\newcommand{\TT}{\mathbb{T}}
\newcommand{\GG}{\mathbb{G}}
\newcommand{\EE}{\mathbb{E}}
\newcommand{\CC}{\mathbb{C}}
\newcommand{\PP}{\mathbb{P}}
\newcommand{\ba}{\mathbf{a}}
\newcommand{\be}{\mathbf{e}}
\newcommand{\bff}{\mathbf{f}}
\newcommand{\bm}{\mathbf{m}}
\newcommand{\bR}{\mathbf{R}}
\newcommand{\bS}{\mathbf{S}}
\newcommand{\cA}{\mathcal{A}}
\newcommand{\cM}{\mathcal{M}}
\newcommand{\cO}{\mathcal{O}}
\newcommand{\cT}{\mathcal{T}}
\newcommand{\rB}{\mathrm{B}}
\DeclareMathOperator{\GL}{GL}
\DeclareMathOperator{\Mat}{Mat}
\DeclareMathOperator{\Cent}{Cent}
\DeclareMathOperator{\End}{End}
\DeclareMathOperator{\Spec}{Spec}
\DeclareMathOperator{\Res}{Res}
\DeclareMathOperator{\Id}{Id}
\DeclareMathOperator{\im}{Im}
\DeclareMathOperator{\trdeg}{tr.deg}
\DeclareMathOperator{\wt}{wt}
\DeclareMathOperator{\divf}{div}
\newcommand{\ok}{\overline{k}}
\newcommand{\oU}{\overline{U}}
\newcommand{\oX}{\overline{X}}
\newcommand{\tpi}{\widetilde{\pi}}
\newcommand{\ttheta}{\widetilde{\theta}}
\newcommand{\tPsi}{\widetilde{\Psi}}
\newcommand{\oFqt}{\overline{\FF_q(t)}}
\newcommand{\Ga}{{\GG_{\mathrm{a}}}}
\newcommand{\Gm}{{\GG_{\mathrm{m}}}}
\newcommand{\smfrac}[2]{{\textstyle \frac{#1}{#2}}}
\begin{document}

\title[Geometric $\Gamma$-values and $\zeta$-values]{Geometric
Gamma values and zeta values in positive characteristic}

\author{Chieh-Yu Chang}
\address{National Center for Theoretical Sciences, Mathematics Division,
National Tsing Hua University, Hsinchu City 30042, Taiwan
  R.O.C.}
\address{Department of Mathematics, National Central
  University, Chung-Li 32054, Taiwan R.O.C.}

\email{cychang@math.cts.nthu.edu.tw}

\author{Matthew A. Papanikolas}
\address{Department of Mathematics, Texas A{\&}M University, College Station,
TX 77843-3368, USA} \email{map@math.tamu.edu}

\author{Jing Yu}
\address{Department of Mathematics, National Taiwan University, Taipei 106, Taiwan R.O.C. }
\email{yu@math.ntu.edu.tw}

\thanks{The first author was supported by a NCTS
postdoctoral fellowship. The second author was supported by NSF
Grant DMS-0600826.  The third author was supported by NSC Grant No.\
96-2628-M-007-006.}

\subjclass[2000]{Primary 11J93; Secondary 11M38, 11G09}

\date{August 29, 2009}

\begin{abstract}
In analogy with values of the classical Euler $\Gamma$-function at
rational numbers and the Riemann $\zeta$-function at positive
integers, we consider Thakur's geometric $\Gamma$-function evaluated
at rational arguments and Carlitz $\zeta$-values at positive
integers.  We prove that, when considered together, all of the
algebraic relations among these special values arise from the
standard functional equations of the $\Gamma$-function and from the
Euler-Carlitz relations and Frobenius $p$-th power relations of the
$\zeta$-function.
\end{abstract}

\keywords{Algebraic independence, Gamma values, zeta values,
$t$-motives}

\maketitle

\section{Introduction: a tale of two motives}
The period $\tpi$ of the Carlitz module is central to the world of
function field arithmetic.  Indeed it appears in several disparate
places, from explicit class field theory to Gauss sums to Drinfeld
modular forms (see \cite{Goss,Thakur}).  Moreover, $\tpi$ is closely
related to values of Thakur's geometric $\Gamma$-function
\cite{Thakur91} and the Carlitz $\zeta$-function
\cite{Carlitz1,Goss}.  Recent work of Anderson, Brownawell, and
Papanikolas \cite{ABP} (on $\Gamma$-values) and Chang and Yu
\cite{ChangYu} (on $\zeta$-values) have determined all algebraic
relations among special values of these functions individually.

Since $\tpi$ links these special values together, it is natural to
ask to what extent there are algebraic relations among
$\Gamma$-values and $\zeta$-values when considered together, and
this is the question addressed in the present paper.  The
answer anticipated by the theory of $t$-motives is that $\tpi$ is
the only link between $\Gamma$-values and $\zeta$-values, and indeed
our theorem (Theorem~\ref{T:MainThm}) shows that aside from
relations among $\Gamma$-values and $\zeta$-values involving $\tpi$
there are no other algebraic relations.  In summary this is the tale
of two motives, bringing the $t$-motive for $\Gamma$-values introduced in \cite{ABP} and
the $t$-motive for $\zeta$-values from \cite{ChangYu} on the same stage.

\subsection{Geometric $\Gamma$-values}
Let $\FF_q$ be the finite field with $q$ elements, where $q$ is a
power of a prime number $p$.  Let $A := \FF_q[\theta]$ and $k :=
\FF_q(\theta)$, where $\theta$ is a variable.  Let $A_+ \subseteq A$
be the subset of monic polynomials.  Let $k_\infty :=
\FF_q((1/\theta))$ be the completion of $k$ with respect to the
infinite absolute value on $k$, for which $|\theta|_\infty = q$. Let
$\CC_\infty$ be the completion of a fixed algebraic closure
$\overline{k_\infty}$ of $k_\infty$, and finally let $\ok$ be the
algebraic closure of $k$ in $\overline{k_\infty}$.

Working in analogy with the classical Euler $\Gamma$-function,
Thakur \cite{Thakur91} studied the geometric $\Gamma$-function over
$A$, which is a specialization of the two-variable $\Gamma$-function
of Goss~\cite{Goss88},
\[
  \Gamma(z) := \frac{1}{z} \prod_{n \in A_+} \biggl( 1 + \frac{z}{n} \biggr)^{-1}, \quad z \in \CC_\infty.
\]
It is a meromorphic function on $\CC_\infty$ with poles at zero and
$-n \in -A_+$ and satisfies several functional equations, which are
analogous to the translation, reflection, and Gauss multiplication
identities satisfied by the classical $\Gamma$-function.

\emph{Special Gamma values} are those values $\Gamma(r)$ with $r \in k \setminus A$.  Since, when $a \in A$, $\Gamma(a)$ is either $\infty$ or in $k$, we can restrict to special $\Gamma$-values for transcendence questions.  Now the functional equations for $\Gamma(z)$ induce families of algebraic relations among special $\Gamma$-values.  Moreover, if for $x$, $y \in \CC_\infty^\times$ we set $x \sim y$ when $x/y \in \ok^\times$, then for all $r \in k\setminus A$, $a \in A$, $g \in A_+$ with $\deg g = d$, we have the following relations:
\begin{itemize}
\item $\Gamma(r + a) \sim \Gamma(r)$,
\item$\prod_{\xi \in \FF_q^\times} \Gamma(\xi r) \sim \tpi$,
\item $\prod_{a \in A/g} \Gamma( \frac{r+a}{g} ) \sim \tpi^{\frac{q^d - 1}{q-1}}\Gamma(r)$.
\end{itemize}
Here $\tpi$ is algebraic over $k_\infty$ and is a fundamental period
for the Carlitz module, much in the same way that $2\pi\sqrt{-1}$ is
a fundamental period for the multiplicative group $\Gm$ over $\CC$.
In analogy with the transcendence of $2\pi\sqrt{-1}$ over $\QQ$,
Wade \cite{Wade} proved that $\tpi$ is transcendental over $k$.

As in the classical case, natural questions arise about the
transcendence and algebraic independence of these special
$\Gamma$-values, and much is now known.  As observed by Thakur
\cite{Thakur91}, for $q=2$ all values of $\Gamma(r)$, $r \in k
\setminus A$, are $\ok$-multiples of $\tpi$ and hence are
transcendental over $k$.  Thakur also related other special values
to periods of Drinfeld modules and deduced their transcendence.
Sinha \cite{Sinha} proved the first transcendence result for a
general class of special $\Gamma$-values: he showed that
$\Gamma(\frac{a}{f} + b)$ is transcendental over $k$ whenever $a$,
$f \in A_+$, $\deg a < \deg f$, and $b \in A$.  Sinha's result was
obtained by representing the $\Gamma$-values in question as periods
of certain $t$-modules over $\ok$ using the soliton functions of
Anderson \cite{Anderson92} and then invoking a transcendence
criterion of Gelfond-Schneider type established by Yu~\cite{Yu89}.
Expanding on Sinha's method, Brownawell and Papanikolas \cite{BP02}
represented all values $\Gamma(r)$, $r \in k \setminus A$, as
periods of $t$-modules over $\ok$ and thus proved transcendence for
all special $\Gamma$-values.

For algebraic relations among special $\Gamma$-values, Thakur
\cite{Thakur91} adapted the Deligne-Koblitz-Ogus criterion to this
setting and devised a diamond bracket criterion to determine which
algebraic relations among special $\Gamma$-values arise from the
functional equation relations.  More specifically, a
$\Gamma$-monomial is a monomial, with positive or negative
exponents, in $\tpi$ and special $\Gamma$-values, and Thakur's
criterion can decide whether a given $\Gamma$-monomial is in $\ok$.
In \cite{BP02}, Brownawell and Papanikolas showed that the only
$\ok$-linear relations among $1$, $\tpi$, and special
$\Gamma$-values are those explained by the diamond bracket
relations.  This result was obtained by analyzing the sub-$t$-module
structure of Sinha's $t$-modules and then invoking Yu's
sub-$t$-module theorem \cite{Yu97}, which plays the role here of
W\"ustholz's subgroup theorem~\cite{Wustholz89}.

In 2004, Anderson, Brownawell, and Papanikolas \cite{ABP} established
a new linear independence criterion (the so-called ``ABP-criterion''),
which is a motivic translation of Yu's sub-$t$-module theorem.  They
adapted Sinha's construction to create $t$-motives whose periods contain
the special $\Gamma$-values in question.  Again a key component was the
interpolation of special $\Gamma$-values via Anderson's soliton functions
\cite{Anderson92}.  Anderson, Brownawell, and Papanikolas used the
ABP-criterion to show that all algebraic relations over $\ok$ among
special $\Gamma$-values arise from diamond bracket relations among
$\Gamma$-monomials, and thus showed that all algebraic relations among
special $\Gamma$-values can be explained by the standard functional
equations.  As a consequence, the transcendence degrees of the fields
generated by special $\Gamma$-values can be obtained explicitly.

\begin{theorem}[Anderson-Brownawell-Papanikolas, {\cite[Cor.~1.2.2]{ABP}}] \label{T:ABPMain}
For all $f \in A_+$ of positive degree, the transcendence degree of
the field
\[
  \ok \bigl(\{ \tpi \} \cup \bigl\{ \Gamma(r) \bigm| r \in
  \smfrac{1}{f} A \setminus (\{0\} \cup -A_+) \bigr\} \bigr)
\]
over $\ok$ is $1 + \frac{q-2}{q-1}\cdot \#(A/f)^\times$.
\end{theorem}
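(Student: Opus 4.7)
The strategy is to establish matching upper and lower bounds on the transcendence degree. For the upper bound, I would compute the rank of the multiplicative $\ZZ$-module generated by $\tpi$ and the values $\Gamma(r)$ for $r \in \frac{1}{f}A \setminus (\{0\} \cup -A_+)$, modulo the subgroup cut out by the three functional equations (translation, reflection, Gauss multiplication). Writing exponent vectors indexed by such $r$ together with an extra coordinate for $\tpi$, the functional equations become an explicit system of linear relations on exponent vectors, and Thakur's diamond bracket criterion asserts that these are exactly the relations forcing a $\Gamma$-monomial to lie in $\ok^\times$. A direct enumeration of the resulting quotient group, keyed off the natural action of $(A/f)^\times$ on $\frac{1}{f}A / A$, yields rank $1 + \frac{q-2}{q-1}\cdot \#(A/f)^\times$. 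This bounds the transcendence degree from above, since any $\ok$-algebraic relation among these values records at least the same exponent-level vanishing.

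For the matching lower bound I would pass to the framework of $t$-motives and invoke the ABP linear independence criterion. Following and extending Sinha's strategy, one uses Anderson's soliton functions, specialized at torsion points of the Carlitz module, to realize each special value $\Gamma(r)$ as an entry of the period matrix of an explicit dual $t$-motive $M_f$ defined over $\ok$; the same period matrix also contains $\tpi$. The ABP criterion then states that any $\ok$-linear relation among the entries of the period matrix of $M_f$ lifts to a morphism of pre-$t$-motives; equivalently, any algebraic relation among the $\Gamma$-values and $\tpi$ must already be visible at the motivic level.

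The principal obstacle is then to show that every such motivic relation is accounted for by the diamond bracket relations used in the upper bound, so that no unexpected collapse of transcendence degree can occur. This requires a careful analysis of the sub-$t$-motive structure of $M_f$: one must verify that the morphisms induced by the translation, reflection, and Gauss multiplication identities already exhaust $\Hom$ in the relevant category of pre-$t$-motives built from the soliton construction. Combining this structural statement with the ABP criterion matches the lower bound to the combinatorial upper bound and completes the proof.
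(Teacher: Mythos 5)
The paper does not prove this statement --- it quotes it directly as Corollary 1.2.2 of \cite{ABP} and uses it as a black box to compute $\dim\Gamma_{M_f}$ in \S\ref{S:GammaMotive}. So your sketch must be measured against the argument of \cite{ABP} itself.

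Your architecture matches \cite{ABP} at a high level (diamond-bracket combinatorics keyed to the $(A/f)^\times$-action for the upper bound; soliton-interpolated $t$-motives $H(\ba)$ and the ABP linear-independence criterion, read Tannakianly via \cite{Papanikolas}, for the lower bound), but it stops exactly where the real work begins, and it contains one misattribution. First, Thakur's diamond-bracket criterion gives only the \emph{sufficiency} direction: vanishing diamond brackets force a $\Gamma$-monomial into $\ok^\times$, which is what yields the upper bound. The converse --- that a $\Gamma$-monomial lies in $\ok^\times$ only if its diamond brackets vanish, i.e.\ that no algebraic relations exist beyond the functional equations --- is precisely the theorem of \cite{ABP} and cannot be folded into ``Thakur's criterion.'' Second, your last paragraph names the genuine content (``verify that the morphisms induced by the functional equations already exhaust $\Hom$'') and then stops. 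In \cite{ABP} that verification is the bulk of the paper: it exploits the geometric CM action $\bR_f \hookrightarrow \End_{\ok[t,\sigma]}(H(\ba))$, a detailed divisor-theoretic analysis of the solitons $g_\ba$ on $\oX_f$ (via $\divf(g_\ba) = -(\wt\ba)\cdot\infty_{X_f} + \xi_\ba + W_\ba^{(1)} - W_\ba$), and $(A/f)^\times$-equivariance to show that the $\ok[t]$-linear relations output by the ABP criterion force the diamond brackets to vanish. Without supplying that analysis, what you have is a correct road map for \cite{ABP}, not a proof.
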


\subsection{Carlitz $\zeta$-values}
There is also another set of special values closely related to the
Carlitz period $\tpi$.  In \cite{Carlitz1} Carlitz considered the
power sums
\[
  \zeta_C(n) := \sum_{a \in A_+} \frac{1}{a^n} \in k_\infty, \quad n = 1, 2, 3,\ldots,
\]
which are now called \emph{Carlitz zeta values.}  In analogy with
values of the Riemann $\zeta$-function at positive even integers,
Carlitz discovered that $\zeta_C(n)/\tpi^n$ is in $k$ whenever $n$
is divisible by $q-1$.  Thus we call a positive integer $n$
\textbf{even} if it is a multiple of $q-1$.  The ratios
$\zeta_C(n)/\tpi^n$ for \textbf{even} $n$ involve what are now
called Bernoulli-Carlitz numbers, and the theory is analogous to the
case of the Riemann $\zeta$-function. In particular, when $q=2$, all
$\zeta_C(n)$ are $k$-multiples of $\tpi^n$.

For each positive integer $n$, Anderson and Thakur
\cite{AndersonThakur} introduced the $n$-th tensor power $C^{\otimes
n}$ of the Carlitz module $C$ and explicitly related $\zeta_C(n)$ to
the last coordinate of the logarithm of a special algebraic point of
$C^{\otimes n}$.  Using this result, Yu \cite{Yu91} proved that each
$\zeta_C(n)$ is transcendental over $k$, and furthermore, when $n$
is not a multiple of $q-1$, he established the transcendence of
$\zeta_C(n)/\tpi^n$.  Later in 1997, Yu \cite{Yu97} proved that the
Euler-Carlitz relations are the only $\ok$-linear relations among
Carlitz $\zeta$-values at positive integers.

For algebraic relations among Carlitz $\zeta$-values, in addition to
the Euler-Carlitz relations, there are also the Frobenius $p$-th
power relations: for positive integers $m$, $n$,
\[
  \zeta_C(p^m n) = \zeta_C(n)^{p^m}.
\]
Chang and Yu \cite{ChangYu} extended Yu's previous results on
$\ok$-linear relations and proved that all algebraic relations over
$\ok$ among Carlitz $\zeta$-values at positive integers arise from
the Euler-Carlitz relations and the Frobenius relations.

\begin{theorem}[Chang-Yu {\cite[p.~322]{ChangYu}}] \label{T:ChangYu}
For any positive integer $s$, the transcendence degree of the field
\[
  \ok( \tpi, \zeta_C(1), \dots, \zeta_C(s))
\]
over $\ok$ is $s - \lfloor s/p \rfloor - \lfloor s/(q-1) \rfloor +
\lfloor s/(p(q-1)) \rfloor + 1$.
\end{theorem}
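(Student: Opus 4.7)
The strategy is to reduce the generating set via the known relations to an upper bound, then match this bound by a motivic Galois group computation.

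For the upper bound, I would first apply the Frobenius relation $\zeta_C(p^m n) = \zeta_C(n)^{p^m}$ to eliminate $\zeta_C(n)$ whenever $p \mid n$, and then apply the Euler-Carlitz relations to eliminate $\zeta_C(n)$ whenever $(q-1) \mid n$, since such a value lies in $\ok \cdot \tpi^n$. Writing $S := \{\, n : 1 \le n \le s,\ p \nmid n,\ (q-1) \nmid n \,\}$, inclusion-exclusion (using $\gcd(p, q-1) = 1$, hence $\mathrm{lcm}(p, q-1) = p(q-1)$) gives
\[
  |S| = s - \lfloor s/p \rfloor - \lfloor s/(q-1) \rfloor + \lfloor s/(p(q-1)) \rfloor,
\]
and $\ok(\tpi, \zeta_C(1), \ldots, \zeta_C(s)) = \ok(\tpi, \{\zeta_C(n)\}_{n \in S})$, so the transcendence degree is at most $1 + |S|$.

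For the matching lower bound, the goal is to prove that $\tpi$ together with $\{\zeta_C(n)\}_{n \in S}$ are algebraically independent over $\ok$. The bridge from zeta values to motives is the Anderson-Thakur period interpretation: each $\zeta_C(n)$ is, up to an explicit nonzero algebraic factor, the last coordinate of the logarithm of the $n$-th tensor power $C^{\otimes n}$ of the Carlitz module at a specific $\ok$-rational point. Translating into the dual $t$-motive framework of \cite{ABP} produces, for each $n \in S$, an object $\cM_n$ sitting in a short exact sequence with $C^{\otimes n}$ as sub-object and the trivial motive $\mathbf{1}$ as quotient, whose rigid-analytic trivialization has entries containing $\tpi^n$ on the diagonal and a nonzero scalar multiple of $\zeta_C(n)$ off the diagonal. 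I would then form $\cM := \bigoplus_{n \in S} \cM_n$ and invoke Papanikolas's theorem, which identifies the transcendence degree of the field generated by the entries of a rigid-analytic trivialization of $\cM$ with the dimension of its motivic Galois group $\Gamma_{\cM}$.

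The main obstacle is the computation of $\Gamma_{\cM}$. Each individual $\Gamma_{\cM_n}$ is a closed subgroup of $\Gm \ltimes \Ga$ surjecting onto the Carlitz Galois group $\Gm$, with $\Gm$ acting on $\Ga$ by the character of weight $n$; the fact that $\Gamma_{\cM_n}$ is genuinely two-dimensional (rather than collapsing onto $\Gm$) is equivalent to the transcendence of $\zeta_C(n)/\tpi^n$, ensured by the exclusion $(q-1) \nmid n$ together with Yu's theorem. The global group $\Gamma_{\cM}$ then sits as a closed subgroup of the fiber product $\Gm \ltimes \prod_{n \in S} \Ga$ that surjects onto every factor, and the heart of the proof is to show this inclusion is an equality. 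A closed $\Gm$-stable subgroup of $\Ga^{|S|}$ surjecting onto each coordinate must be the full group unless the weights $n \in S$ satisfy some Frobenius-twist relation $n_i = p^k n_j$; the exclusion $p \nmid n$ in the definition of $S$ rules precisely this out. With the ABP linear-independence criterion providing the essential rigidity --- forcing any hypothetical algebraic relation among the off-diagonal entries to descend to a Frobenius-compatible linear relation, contradicted by the distinctness of weights --- this representation-theoretic analysis yields $\dim \Gamma_{\cM} = 1 + |S|$, closing the argument.
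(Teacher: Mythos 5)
Your high-level strategy matches the paper's: eliminate $\zeta_C(n)$ for $p\mid n$ or $(q-1)\mid n$ to get the upper bound $1+|S|$ with $S=U(s)$, then realize the matching lower bound as the dimension of a motivic Galois group via Theorem~\ref{T:PapanikolasMain}. The upper bound argument is fine, and you correctly put your finger on the essential feature of the lower bound — that the kernel of the projection to $\Gm$ is a $\Gm$-stable subgroup of a vector group in characteristic $p$, so the only obstruction to its being everything is a Frobenius twist relation among the weights, which $p\nmid n$ excludes.

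The genuine gap is in the construction of the motive. You posit, for each $n\in S$, a \emph{rank-two} extension $\cM_n$ of $\mathbf 1$ by $C^{\otimes n}$ whose rigid analytic trivialization has a scalar multiple of $\zeta_C(n)$ off the diagonal, invoking Anderson--Thakur. But Anderson--Thakur does not hand you such an explicit $\Phi,\Psi$ pair directly. What it provides (Theorem~\ref{T:AndThak}) is the identity $\zeta_C(n)=\sum_{i}h_i^{[n]}\log_C^{[n]}(\theta^i)$, a $k$-linear combination of Carlitz polylogarithms, and it is the individual polylogarithms $\log_C^{[n]}(\alpha)$ that admit clean rank-two realizations via the series $L_{\alpha,n}$ and the functional equation \eqref{E:LalphanFnEq}. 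The Chang--Yu proof therefore builds, for each $n\in U(s)$, a motive $M_n$ of rank $m_n+2$ (not $2$) whose off-diagonal periods are the linearly independent polylogarithms $\log_C^{[n]}(\theta^{\iota(i)})$, proves that the kernel $V_{(s)}$ equals the full vector group $G_{(s)}\cong\prod_n\Ga^{m_n+1}$ by the difference-Galois-theoretic analysis, and only then recovers the algebraic independence of $\{\tpi\}\cup\{\zeta_C(n):n\in U(s)\}$ by the change of transcendence basis described in \eqref{E:indices1}--\eqref{E:indices2} and Remark~\ref{R:ZetaTransBasis}. Your rank-two shortcut would require showing that the $k$-linear combination of polylog extension classes corresponding to $\zeta_C(n)$ is itself realizable by an entire rigid analytic trivialization in $\GL_2(\TT)\cap\Mat_2(\EE)$; this is not immediate, and in practice one passes through the polylog motives exactly as Chang--Yu do. Relatedly, your appeal to ``the ABP linear-independence criterion providing the essential rigidity'' is too vague to carry the weight of the final step: the actual proof that $V_{(s)}=G_{(s)}$ requires the explicit description of the defining ideal of $\Gamma_{\Psi}$ from Frobenius difference Galois theory, not merely the ABP criterion in its linear-independence form. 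You also lean on Yu's transcendence of $\zeta_C(n)/\tpi^n$ as an input for the two-dimensionality of each $\Gamma_{\cM_n}$, which is legitimate as a prior result but is, in the Chang--Yu development, a consequence of the same Galois group computation rather than an external ingredient.
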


To prove this theorem Chang and Yu adapted methods of
Papanikolas~\cite{Papanikolas} on algebraic independence of Carlitz
logarithms to deduce algebraic independence results of Carlitz
polylogarithms.  In turn, they then proved the theorem by using
results of Anderson and Thakur \cite{AndersonThakur}, who showed
that Carlitz $\zeta$-values can be explicitly expressed in terms of
$k$-linear combinations of Carlitz polylogarithms with algebraic
arguments.

\subsection{Special $\Gamma$-values and Carlitz $\zeta$-values}
The main theorem of this paper is that, as one might expect, all the
algebraic relations among special $\Gamma$-values and Carlitz
$\zeta$-values arise from the standard relations.  One might anticipate
similar results in the classical setting, though little is known in
these directions to date.

\begin{theorem}[cf.\ Theorem~\ref{T:MainThmRedux}] \label{T:MainThm}
Given $f \in A_+$ with positive degree and $s$ a positive integer,
the transcendence degree of the field
\[
  \ok \bigl( \{\tpi\} \cup \bigl\{ \Gamma(r) \bigm| r \in
  \smfrac{1}{f} A \setminus (\{0\} \cup -A_+) \bigr\}
  \cup \{ \zeta_C(1), \dots, \zeta_C(s) \} \bigr)
\]
over $\ok$ is
\[
  1 + \frac{q-2}{q-1}\cdot \#(A/f)^\times + s - \lfloor s/p \rfloor - \lfloor s/(q-1) \rfloor + \lfloor s/(p(q-1)) \rfloor.
\]
\end{theorem}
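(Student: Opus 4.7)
The plan is to realize the field in Theorem~\ref{T:MainThm} as the period field of a single $t$-motive and compute its motivic Galois group by combining the two Galois groups already analyzed.  Let $M_\Gamma$ denote the $t$-motive from~\cite{ABP} whose period matrix generates, over $\ok$, the field in Theorem~\ref{T:ABPMain}, and let $M_\zeta$ denote the $t$-motive from~\cite{ChangYu} whose period matrix generates the field in Theorem~\ref{T:ChangYu}.  Set $M:=M_\Gamma\oplus M_\zeta$.  By Papanikolas' theorem~\cite{Papanikolas}, the transcendence degree over $\ok$ of the period field of $M$ equals $\dim_{\oFqt}\Gamma_M$, where $\Gamma_M$ is the tannakian Galois group of $M$; since the periods of $M$ jointly generate the field in question, it suffices to compute $\dim\Gamma_M$.

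Both $M_\Gamma$ and $M_\zeta$ contain the Carlitz motive $C$ as a sub-quotient (reflecting the appearance of $\tpi$ in both period fields), and the tannakian subcategory $\langle C\rangle$ has Galois group $\Gm$.  Standard tannakian formalism yields surjections $\Gamma_M\twoheadrightarrow\Gamma_{M_\Gamma}$ and $\Gamma_M\twoheadrightarrow\Gamma_{M_\zeta}$ whose further compositions to $\Gm$ agree, so one gets an embedding
\[
\Gamma_M\hookrightarrow\Gamma_{M_\Gamma}\times_{\Gm}\Gamma_{M_\zeta}.
\]
This immediately gives the upper bound $\dim\Gamma_M\le\dim\Gamma_{M_\Gamma}+\dim\Gamma_{M_\zeta}-1$, which by Theorems~\ref{T:ABPMain} and~\ref{T:ChangYu} equals the transcendence degree claimed.

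The main obstacle is to show equality in the embedding above, which by tannakian duality amounts to proving that the maximal common tannakian subcategory of $\langle M_\Gamma\rangle$ and $\langle M_\zeta\rangle$ is precisely $\langle C\rangle$.  The strategy is to exploit the sharply different shapes of the two Galois groups.  In~\cite{ABP}, $\Gamma_{M_\Gamma}$ is identified with a subtorus of a product of $\Gm$'s indexed by characters of $(A/f)^\times$; in particular $\langle M_\Gamma\rangle$ is a semisimple tannakian category containing no non-trivial extensions of $\mathbf{1}$ by tensor powers of $C$.  In contrast, $\Gamma_{M_\zeta}$ is an extension of $\Gm$ by a non-trivial unipotent radical coming from the Carlitz polylogarithm extensions of $\mathbf{1}$ by $C^{\otimes n}$ in~\cite{ChangYu}.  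Therefore any object of $\langle M_\zeta\rangle$ lying in $\langle M_\Gamma\rangle$ must be contained in the semisimple part of $\langle M_\zeta\rangle$, which consists of direct sums of tensor powers of $C$ and hence lies in $\langle C\rangle$.  This identifies the common quotient as $\Gm$ and forces $\Gamma_M$ to surject onto the fibered product.

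With equality $\Gamma_M=\Gamma_{M_\Gamma}\times_{\Gm}\Gamma_{M_\zeta}$ in hand, the dimension count $\dim\Gamma_M=\dim\Gamma_{M_\Gamma}+\dim\Gamma_{M_\zeta}-1$ combined with Theorems~\ref{T:ABPMain} and~\ref{T:ChangYu} gives exactly the transcendence degree stated in Theorem~\ref{T:MainThm}.  The delicate technical work lies in verifying rigorously that no piece of the unipotent radical of $\Gamma_{M_\zeta}$ can be identified with a sub-quotient of the torus $\Gamma_{M_\Gamma}$, i.e.\ that Carlitz polylogarithm extensions remain non-split after being pulled into the category generated by $M_\Gamma$.
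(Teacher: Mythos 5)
Your architecture is sound and close in spirit to the paper's: form the direct-sum motive, invoke Theorem~\ref{T:PapanikolasMain} to reduce to computing $\dim\Gamma_M$, and exploit the structural contrast between the two factor Galois groups (one a torus, the other a $\Gm$-by-unipotent extension).  The two proofs diverge, however, in how that dimension is pinned down.  The paper's Theorem~\ref{T:TransBases} never invokes Goursat or fibered products: it notes that $\Gamma_\Psi$ is solvable, that the surjection $\Gamma_\Psi\twoheadrightarrow\Gamma_{\Psi_{(s)}}$ forces the unipotent radical of $\Gamma_\Psi$ to have dimension at least $\dim\Gamma_{\Psi_{(s)}}-1$, and that the surjection onto the torus $\Gamma_{\Psi_0}$ forces the maximal torus to have dimension at least $\dim\Gamma_{\Psi_0}$; adding these gives the lower bound, while the upper bound comes from the elementary observation that $\tpi$ lies in $\ok(\Psi_{(s)}(\theta))$ and $\tpi^m$ lies in $\ok(\Psi_0(\theta))$, so the period fields share one transcendental element.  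Your fibered-product computation arrives at the same numbers but requires the compatible surjections to $\Gm$ to be set up precisely: you assert that $C$ is a sub-quotient of $M_\Gamma$, but the $\Gamma$-motive $M_f$ built from the $H(\ba)$'s is only known a priori to have $C^{\otimes m}$ (for some $m$ depending on $\det\Phi_\ba$) in $\langle M_f\rangle$, not $C$ itself; one can patch this by replacing $M_\Gamma$ with $M_\Gamma\oplus C$ or by using the $m$-th power isogeny of $\Gm$, but this needs to be said.

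The more serious issue is the claim ``In~\cite{ABP}, $\Gamma_{M_\Gamma}$ is identified with a subtorus of a product of $\Gm$'s.''  That is not in \cite{ABP}, which works in the language of $t$-modules and Yu's linear independence criterion rather than Tannakian Galois groups; \cite{ABP} gives the transcendence degree (Theorem~\ref{T:ABPMain}) but says nothing about the group being a torus.  Establishing the torus structure is the main new technical input of the present paper (Proposition~\ref{P:GammaMaTorus}), and it is proved by exhibiting the geometric complex multiplication $\bR_f\subseteq\End_{\ok[t,\sigma]}(H(\ba))$ and embedding $\Gamma_{M_\ba}$ into the centralizer $\Res_{R_f/\FF_q(t)}\Gm$ of $(R\otimes R_f)^\times$ in $\GL_\ell$.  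Your proof relies on this fact at a crucial step, so citing it to the wrong source leaves a real gap.  Relatedly, your closing paragraph flags the ``delicate technical work'' as showing that the polylogarithm extensions stay non-split in $\langle M_\Gamma\rangle$ --- but once $\Gamma_{M_\Gamma}$ is known to be a torus, that is automatic (a torus admits no nontrivial unipotent quotient, and an extension of $\Gm$ by a vector group admits no torus quotient of dimension $>1$).  The genuinely delicate work is Proposition~\ref{P:GammaMaTorus} itself, which you have implicitly presupposed.  Finally, note the paper treats $q=2$ separately since in that degenerate case everything collapses to $\ok$-multiples of powers of $\tpi$; your argument should say a word about this as well.
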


Our tool for proving algebraic independence is the main theorem of
\cite{Papanikolas}, which is an application of the ABP-criterion
\cite{ABP} to monomials of periods and is in some sense a function
field version of Grothendieck's conjecture on periods of abelian
varieties.  The $t$-motive associated to special $\Gamma$-values
turns out to have geometric complex multiplication from a Carlitz
cyclotomic field, which enables us to show that the Galois group of
this ``$\Gamma$-motive'' is a torus contained inside a finite
product of the Weil restriction of scalars of $\Gm$ from the
cyclotomic field in question.  On the other hand, according to
\cite{ChangYu}, the Galois group of the ``$\zeta$-motive,'' i.e.\
the $t$-motive associated to Carlitz $\zeta$-values, is an extension
of $\Gm$ by a vector group.  Once we consider the direct sum of
these two $t$-motives, we deduce that the resulting Galois group is
an extension of a torus by a vector group and compute its dimension
to prove the result.

\begin{remark}
The $\Gamma$-function considered in this paper is ``geometric'' in
the sense that it is defined via the theory of Carlitz cyclotomic
covers of $\PP^1$ (see \S\ref{S:GammaMotives}).  In the theory of
function fields there is also an ``arithmetic'' $\Gamma$-function
studied by Carlitz and Goss \cite{Goss}, and which also has a rich
transcendence theory \cite{Thakur96}.  One may ask about algebraic
independence questions for the arithmetic $\Gamma$-function and its
relations with Carlitz $\zeta$-values as well.  These questions are
addressed in \cite{CPTY}.
\end{remark}

\section{Galois groups of $t$-motives}

\subsection{Notation and preliminaries}\

\begin{longtable}{p{0.5truein}@{\hspace{5pt}$=$\hspace{5pt}}p{5truein}}
$\FF_q$ & the finite field with $q$ elements, for $q$ a power of a prime number $p$. \\
$\theta$, $t$, $z$ & independent variables. \\
$A$ & $\FF_q[\theta]$, the polynomial ring in the variable $\theta$ over $\FF_q$. \\
$A_+$ & the set of monic elements of $A$.\\
$k$ & $\FF_q(\theta)$, the fraction field of $A$.\\
$k_\infty$ & $\FF_q((1/\theta))$, the completion of $k$ with respect to the place at infinity.\\
$\overline{k_\infty}$ & a fixed algebraic closure of $k_\infty$.\\
$\ok$ & the algebraic closure of $k$ in $\overline{k_\infty}$.\\
$\ttheta$ & a fixed choice in $\ok$ of a $(q-1)$-th root of $-\theta$.\\
$\CC_\infty$ & the completion of $\overline{k_\infty}$ with respect to the canonical extension of $\infty$.\\
$|\cdot|_\infty$ & the absolute value on $\CC_\infty$, normalized so that $|\theta|_\infty = q$.\\
$\TT$ & $\{ f \in \CC_\infty[[t]] \mid \textnormal{$f$ converges on $|t|_\infty \leq 1$} \}$, the Tate algebra over $\CC_\infty$.\\
$\LL$ & the fraction field of $\TT$.\\
$\Ga$ & the additive group. \\
$\GL_{r/F}$ & for a field $F$, the $F$-group scheme of invertible $r \times r$ matrices. \\
$\Gm$ & $\GL_1$, the multiplicative group.
\end{longtable}

For $n \in \ZZ$, given a Laurent series $f = \sum_i a_i t^i \in
\CC_\infty((t))$, we define the $n$-fold twist of $f$ by $f^{(n)} =
\sum_i a_i^{q^n} t^i$.  For each $n$, the twisting operation is an
automorphism of $\CC_\infty((t))$ and stabilizes several subrings,
e.g., $\ok[[t]]$, $\ok[t]$, and $\TT$.  More generally, for any
matrix $B$ with entries in $\CC_\infty((t))$, we define $B^{(n)}$ by
the rule ${B^{(n)}}_{ij} = B_{ij}^{(n)}$.  Also of some note (cf.\
\cite[Lem.~3.3.2]{Papanikolas}) is that
\[
  \FF_q[t] = \{ f \in \TT \mid f^{(-1)} = f \}, \quad \FF_q(t) = \{
  f \in \LL \mid f^{(-1)} = f \}.
\]
A power series $f = \sum_{i=0}^\infty a_i t^i \in \CC_\infty[[t]]$
that satisfies
\[
  \lim_{i\to \infty} \sqrt[i]{|a_i|_\infty} = 0, \quad [k_\infty(a_0,a_1,a_2, \ldots) : k_\infty] < \infty,
\]
is called an \emph{entire power series}.  As a function of $t$, such
a power series $f$ converges on all of $\CC_\infty$ and, when
restricted to $\overline{k_\infty}$, $f$ takes values in
$\overline{k_\infty}$.  The ring of entire power series is denoted
by $\EE$, and it is invariant under $n$-fold twisting $f \mapsto
f^{(n)}$.

\subsection{Galois groups and Frobenius difference equations} \label{S:GalGroups}
We follow \cite{Papanikolas} (see also \cite{Anderson86,ABP}) in
working with $t$-motives and their Galois groups.  Let
$\ok[t,\sigma]$ be the polynomial ring in variables $t$ and $\sigma$
subject to the relations,
\[
  ct=tc,\ \sigma t = t \sigma,\ \sigma c = c^{1/q} \sigma, \quad c \in \ok.
\]
Thus for $f \in \ok[t]$, one has $\sigma f = f^{(-1)} \sigma$.  An
\emph{Anderson $t$-motive} is a left $\ok[t,\sigma]$-module $\cM$,
which is free and finitely generated both as a left $\ok[t]$-module
and as a left $\ok[\sigma]$-module and which satisfies
\[
  (t-\theta)^N \cM \subseteq \sigma \cM,
\]
for integers $N$ sufficiently large.  The ring of Laurent
polynomials in $\sigma$ with coefficients in $\ok(t)$ is denoted
$\ok(t)[\sigma,\sigma^{-1}]$.  A \emph{pre-$t$-motive} is a left
$\ok(t)[\sigma,\sigma^{-1}]$-module that is finite dimensional over
$\ok(t)$.  The category of pre-$t$-motives is abelian.  Moreover,
there is a natural functor from the category of Anderson $t$-motives
to the category of pre-$t$-motives,
\[
  \cM \mapsto M := \ok(t) \otimes_{\ok[t]} \cM,
\]
where $\sigma$ acts diagonally on $M$.

In what follows we are interested in pre-$t$-motives $M$ that are
\emph{rigid analytically trivial.} To define this, we let $\bm \in
\Mat_{r\times 1}(M)$ be a $\ok(t)$-basis of $M$, and so
multiplication by $\sigma$ on $M$ is represented by
\[
  \sigma(\bm) = \Phi \bm,
\]
for some matrix $\Phi \in \GL_r(\ok(t))$.  Note that if $M$ arises
from an Anderson $t$-motive $\cM$ and $\bm \in \Mat_{r \times
1}(\cM)$, then necessarily also $\Phi \in \Mat_r(\ok[t])$.  Now $M$
is said to be rigid analytically trivial if there exists $\Psi \in
\GL_r(\LL)$ so that
\[
  \sigma(\Psi) = \Psi^{(-1)} = \Phi\Psi.
\]
The existence of such a matrix $\Psi$ is equivalent to the natural
map of $\LL$-vector spaces,
\[
  \LL \otimes_{\FF_q(t)} M^{\rB} \to M^\dagger,
\]
being an isomorphism, where
\begin{align*}
  M^\dagger &:= \LL \otimes_{\ok(t)} M, \textnormal{on which $\sigma$ acts diagonally},\\
  M^{\rB} &:= \textnormal{the $\FF_q(t)$-submodule of $M^\dagger$ fixed by $\sigma$, the `Betti cohomology' of $M$.}
\end{align*}
We then call $\Psi$ a rigid analytic trivialization for the matrix
$\Phi$, and in this situation $\Psi^{-1} \bm$ is an $\FF_q(t)$-basis
of $M^{\rB}$.

Rigid analytically trivial pre-$t$-motives form a neutral Tannakian
category over $\FF_q(t)$ with fiber functor $M \mapsto M^{\rB}$
\cite[Thm.~3.3.15]{Papanikolas}.  The strictly full Tannakian
subcategory generated by the images of Anderson $t$-motives is
called the category of \emph{$t$-motives}, which we denote by $\cT$.
By Tannakian duality, for each $t$-motive $M$ of dimension $r$ over
$\ok(t)$, the Tannakian subcategory $\cT_M$ generated by $M$ is
equivalent to the category of finite dimensional representations
over $\FF_q(t)$ of some algebraic group $\Gamma_M \subseteq
\GL_{r/\FF_q(t)}$.  This algebraic group is called the \emph{Galois
group} of the $t$-motive $M$.  Note that in this situation we always
have a faithful representation
\[
  \varphi : \Gamma_M \hookrightarrow \GL(M^{\rB}),
\]
which is called the tautological representation of $M$.  The
following theorem connects Galois groups of $t$-motives to the
transcendence degrees of interest.

\begin{theorem}[Papanikolas {\cite[Thm.~1.1.7]{Papanikolas}}] \label{T:PapanikolasMain}
Let $M$ be a $t$-motive with Galois group $\Gamma_M$.  Suppose that
$\Phi \in \GL_r(\ok(t)) \cap \Mat_r(\ok[t])$ represents
multiplication by $\sigma$ on $M$ and that $\det \Phi =
c(t-\theta)^s$, $c \in \ok^\times$.  Let $\Psi \in \GL_r(\TT) \cap
\Mat_r(\EE)$ be a rigid analytic trivialization of $\Phi$.  Then the
transcendence degree of the field
\[
  \ok\bigl( \Psi_{ij}(\theta) \mid 1 \leq i, j \leq r \bigr)
\]
over $\ok$ is equal to the dimension of $\Gamma_M$.
\end{theorem}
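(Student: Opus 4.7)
My plan is to: (i) attach to $\Psi$ a ``difference Galois group'' $\Gamma_\Psi$ over $\FF_q(t)$ whose dimension computes $\trdeg_{\ok(t)} \ok(t)(\Psi_{ij})$; (ii) identify $\Gamma_\Psi$ with the Tannakian Galois group $\Gamma_M$; (iii) invoke the ABP criterion to show that specializing $t \mapsto \theta$ preserves this transcendence degree.

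For step (i), let $X$ be an $r \times r$ matrix of indeterminates and let $Z_\Psi \subseteq \GL_{r/\ok(t)}$ be the smallest closed $\ok(t)$-subscheme containing the point $\Psi \in \GL_r(\LL)$; its dimension equals $\trdeg_{\ok(t)} \ok(t)(\Psi_{ij})$. The crucial observation is that $Z_\Psi$ carries a natural torsor structure arising from the functional equation. If we let $\Psi_1, \Psi_2$ denote two independent copies of $\Psi$ living in a suitable tensor square of $\LL$ over $\FF_q(t)$, then $\widetilde{\Psi} := \Psi_1^{-1} \Psi_2$ satisfies $\sigma(\widetilde{\Psi}) = \widetilde{\Psi}$, because $\Psi_i^{(-1)} = \Phi \Psi_i$ for $i = 1, 2$ forces the $\Phi$'s to cancel. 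Hence $\widetilde{\Psi}$ has entries in the $\sigma$-fixed subring, which is $\FF_q(t)$-defined. Define $\Gamma_\Psi \subseteq \GL_{r/\FF_q(t)}$ to be the smallest closed $\FF_q(t)$-subscheme containing $\widetilde{\Psi}$; a direct calculation shows that $\Gamma_\Psi$ is a closed algebraic subgroup and that right multiplication makes $Z_\Psi$ into a $\Gamma_\Psi$-torsor, whence $\dim Z_\Psi = \dim \Gamma_\Psi$.

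Step (ii), the identification $\Gamma_\Psi = \Gamma_M$ inside $\GL_r$, is the main obstacle. I would construct an equivalence of Tannakian categories between $\cT_M$ and $\mathrm{Rep}_{\FF_q(t)}(\Gamma_\Psi)$, compatible with the fiber functors $N \mapsto N^{\rB}$ on one side and the forgetful functor on the other. Every tensor/sub/quotient construction on $M$ is a pre-$t$-motive whose representing matrix is built functorially from $\Phi$ and whose rigid analytic trivialization is built from $\Psi$, so the two sides have matching tensor constructions. The key technical content is to prove that a line in a tensor construction $T(M)^{\rB}$ is $\Gamma_\Psi$-stable if and only if it generates a $\sigma$-stable sub-pre-$t$-motive of $T(M)$; Chevalley's theorem on stabilizers of lines then identifies $\Gamma_\Psi$ with $\Gamma_M$ inside $\GL(M^{\rB})$.

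For step (iii), the hypotheses $\Psi \in \Mat_r(\EE)$ and $\det \Phi = c(t-\theta)^s$ with $c \in \ok^\times$ ensure that $\Psi(\theta) \in \GL_r(\ok)$ is well-defined. The inequality $\trdeg_\ok \ok(\Psi(\theta)) \leq \trdeg_{\ok(t)} \ok(t)(\Psi_{ij})$ is immediate from specialization of a transcendence basis. For the reverse inequality, a degree-$d$ polynomial relation over $\ok$ among the $\Psi_{ij}(\theta)$ is an $\ok$-linear relation among the entries of $\Psi^{\otimes d}(\theta)$; applying the ABP linear independence criterion to the pre-$t$-motive representing $M^{\otimes d}$ (with representing matrix $\Phi^{\otimes d}$ and rigid analytic trivialization $\Psi^{\otimes d}$) lifts this to an $\ok(t)$-linear relation among the entries of $\Psi^{\otimes d}$, and hence to a polynomial relation over $\ok(t)$ among the $\Psi_{ij}$. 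Combining the three steps yields $\trdeg_\ok \ok(\Psi(\theta)) = \dim \Gamma_\Psi = \dim \Gamma_M$, as claimed.
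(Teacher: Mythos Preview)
This theorem is not proved in the present paper; it is quoted verbatim from \cite[Thm.~1.1.7]{Papanikolas} and used as a black box.  There is therefore no ``paper's own proof'' to compare against.  That said, your three-step outline is an accurate high-level summary of Papanikolas's actual argument: your steps (i) and (ii) amount to what the present paper records (again without proof) as Theorem~\ref{T:PapanikolasGalGrp}(c) and (d), and the specialization step via the ABP criterion in (iii) is the remaining ingredient, exactly as in \cite{Papanikolas}.

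Two small slips in your sketch are worth flagging.  First, $\Psi(\theta)$ lies in $\GL_r(\overline{k_\infty})$, not in $\GL_r(\ok)$: its entries are precisely the quantities whose transcendence over $\ok$ is in question.  (Also, the tensor product housing $\Psi_1,\Psi_2$ is $\LL \otimes_{\ok(t)} \LL$, taken over $\ok(t)$ rather than $\FF_q(t)$; compare the construction just before \eqref{E:GammaPsiDefn}.)  Second, a polynomial relation of degree~$d$ among the $\Psi_{ij}(\theta)$ involves monomials of all degrees $\le d$, not only degree exactly~$d$; so one should apply the ABP criterion to the block-diagonal system with matrix $1 \oplus \Phi \oplus \Phi^{\otimes 2} \oplus \cdots \oplus \Phi^{\otimes d}$ and solution $1 \oplus \Psi \oplus \cdots \oplus \Psi^{\otimes d}$, whose determinant is still of the required form $c'(t-\theta)^{s'}$.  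With these adjustments your outline is correct and coincides with the approach of the cited reference.
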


Papanikolas \cite{Papanikolas} has further connected these Galois
groups to the Galois theory of Frobenius difference equations, in
analogy with classical differential Galois theory.  This provides a
method for the explicit computation of $\Gamma_M$ for a $t$-motive
$M$. Assume that $\Phi$ and $\Psi$ are chosen for $M$ as above, and
let $\Psi_1$, $\Psi_2 \in \GL_r(\LL \otimes_{\ok(t)} \LL)$ be the
matrices such that $(\Psi_1)_{ij} = \Psi_{ij} \otimes 1$ and
$(\Psi_2)_{ij} = 1 \otimes \Psi_{ij}$.  Let $\tPsi := \Psi_1^{-1}
\Psi_2 \in \GL_r(\LL \otimes_{\ok(t)} \LL)$.  Now let $X :=
(X_{ij})$ be an $r \times r$ matrix of independent variables.  We
define an $\FF_q(t)$-algebra homomorphism
\[
  \mu_\Psi : \FF_q(t)[X,1/\det X] \to \LL \otimes_{\ok(t)} \LL,
\]
by setting $\mu_\Psi(X_{ij}) = \tPsi_{ij}$.  Finally we set
\begin{equation} \label{E:GammaPsiDefn}
  \Gamma_\Psi := \Spec \im \mu_\Psi.
\end{equation}
Letting $\Lambda_\Psi$ be the field generated over $\ok(t)$ by the
entries of $\Psi$, we obtain the following results.

\begin{theorem}[Papanikolas {\cite[Thms.~4.2.11, 4.3.1, 4.5.10]{Papanikolas}}]
\label{T:PapanikolasGalGrp}
Let $M$ be a $t$-motive with Galois group $\Gamma_M$.  Let $\Phi \in
\GL_r(\ok(t))$ represent multiplication by $\sigma$ on $M$, and let
$\Psi$ be a rigid analytic trivialization for $\Phi$.  Then
\begin{enumerate}
\item[(a)] $\Gamma_\Psi$ is an affine algebraic group scheme over $\FF_q(t)$,
\item[(b)] $\Gamma_\Psi$ is absolutely irreducible and smooth over $\oFqt$,
\item[(c)] $\dim \Gamma_\Psi = \trdeg_{\ok(t)} \Lambda_\Psi$,
\item[(d)] $\Gamma_\Psi$ is isomorphic to $\Gamma_M$ over $\FF_q(t)$.
\end{enumerate}
\end{theorem}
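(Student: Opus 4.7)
The plan is to mimic Picard--Vessiot theory for linear differential equations, adapted here to Frobenius difference equations over $\ok(t)$, and then to invoke Tannakian duality to identify the resulting difference Galois group with $\Gamma_M$. For (a), the decisive input is the identity
\[
  \sigma(\tPsi) = \sigma(\Psi_1)^{-1}\sigma(\Psi_2) = (\Phi\Psi_1)^{-1}(\Phi\Psi_2) = \Psi_1^{-1}\Psi_2 = \tPsi,
\]
which uses that $\Phi$ has entries in $\ok(t)$ so that $\Phi\otimes 1 = 1\otimes\Phi$ in $\LL\otimes_{\ok(t)}\LL$; combined with the ``cocycle'' identity $\tPsi_{13} = \tPsi_{12}\tPsi_{23}$ in $\GL_r(\LL\otimes_{\ok(t)}\LL\otimes_{\ok(t)}\LL)$, this forces $\mu_\Psi$ to respect the comultiplication on $\FF_q(t)[X,1/\det X]$, so that $\im\mu_\Psi$ inherits a Hopf algebra structure and $\Gamma_\Psi$ becomes a closed subgroup scheme of $\GL_{r/\FF_q(t)}$.

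For (b) and (c), I would first analyze the difference ring $\LL\otimes_{\ok(t)}\LL$ using that $\FF_q(t)$ is exactly the fixed field of $\sigma$ on $\LL$ and that $\LL/\ok(t)$ is sufficiently separable. This yields that $\im\mu_\Psi$ is an integral domain and that this property is preserved after base change to $\oFqt$, giving absolute irreducibility; geometric reducedness follows from the separability of the relevant extensions, and a group scheme over a field that is generically smooth is smooth everywhere. For (c), once $\im\mu_\Psi$ is known to be a domain, its Krull dimension equals the transcendence degree of its fraction field over $\FF_q(t)$; unwinding $\tPsi = \Psi_1^{-1}\Psi_2$ and using that the $\Psi_{ij}$ generate $\Lambda_\Psi$ over $\ok(t)$ identifies this with $\trdeg_{\ok(t)}\Lambda_\Psi$.

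The main obstacle is (d), the identification $\Gamma_\Psi\cong\Gamma_M$. The plan is to build a tensor functor from $\cT_M$ to finite-dimensional $\FF_q(t)$-representations of $\Gamma_\Psi$, sending each $N\in\cT_M$ to its Betti fiber $N^{\rB}$ equipped with the $\Gamma_\Psi$-action coming from a rigid analytic trivialization of $N$, and then to verify that this is an equivalence of Tannakian categories. Carrying this out requires the closure property that every object of $\cT_M$ admits a rigid analytic trivialization whose entries lie in the Picard--Vessiot-type extension generated over $\ok(t)$ by the entries of $\Psi$ and $1/\det\Psi$; closure under tensor products, duals, and subquotients must be checked inductively. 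Once this is in place, Tannakian duality furnishes the isomorphism $\Gamma_\Psi\cong\Gamma_M$ over $\FF_q(t)$. The hardest step is simultaneously controlling both the difference algebra and the Tannakian operations, so that the $\Gamma_\Psi$-action on Betti realizations actually coincides with the tautological $\Gamma_M$-action rather than differing by an unseen twist.
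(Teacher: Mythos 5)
This theorem is quoted from Papanikolas \cite{Papanikolas} (Thms.~4.2.11, 4.3.1, 4.5.10); the present paper states it as a citation and gives no proof, so there is no internal argument to compare your sketch against. As a summary of what happens in the cited reference, your outline is accurate: part~(a) does rest on the computation $\sigma(\tPsi)=\tPsi$ (using $\Phi\otimes 1 = 1\otimes\Phi$) together with the cocycle identity, which shows $\im\mu_\Psi$ is a Hopf algebra quotient of $\FF_q(t)[X,1/\det X]$; parts~(b) and~(c) use that $\FF_q(t)$ is the $\sigma$-fixed subfield of $\LL$, separability of the relevant extensions (to get geometric reducedness, hence smoothness of a group scheme over a field), and the fact that $\im\mu_\Psi$ is an integral domain so that its Krull dimension is a transcendence degree; and~(d) is a Tannakian comparison. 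Your sketch is most schematic on~(d), which is where the real technical weight sits in \cite[\S\S 4.3--4.5]{Papanikolas}: one must establish that every object of $\cT_M$ admits a rigid analytic trivialization with entries in the difference field generated over $\ok(t)$ by the entries of $\Psi$ and $1/\det\Psi$, and that the resulting $\Gamma_\Psi$-action on Betti fibers is a tensor functor agreeing with the tautological action. Your caution about an ``unseen twist'' in matching the two actions is precisely the compatibility that has to be verified; in \cite{Papanikolas} it is handled via the explicit formula recalled in Remark~\ref{R:TautRep}.
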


\begin{remark} \label{R:TautRep}
Using the formalism of Frobenius difference equations, the
tautological representation of $M$ can be described as follows.  For
any $\FF_q(t)$-algebra $R$, the map $\varphi : \Gamma_M(R) \to \GL(R
\otimes_{\FF_q(t)} M^{\rB})$ is given by
\[
  \gamma \mapsto \bigl(1 \otimes \Psi^{-1}\bm \mapsto
  (\gamma^{-1} \otimes 1) \cdot (1 \otimes \Psi^{-1} \bm) \bigr).
\]
\end{remark}

\subsection{Carlitz theory and $t$-motives}
Put $D_0 := 1$ and set
\[
D_i = \prod_{j=0}^{i-1} \bigl( \theta^{q^i} - \theta^{q^j} \bigr),
\quad i = 1, 2, \ldots.
\]
The Carlitz exponential is defined by the power series
\[
  \exp_C(z) := \sum_{i=0}^\infty \frac{z^{q^i}}{D_i},
\]
and as a function on $\CC_\infty$ it is an entire function and
satisfies the functional equation
\[
  \exp_C(\theta z) = \theta\exp_C(z) + \exp_C(z)^q.
\]
Moreover, one has the product expansion
\[
  \exp_C(z) = z \prod_{0 \neq a \in A} \biggl( 1 - \frac{z}{a\tpi} \biggr),
\]
where
\[
  \tpi = \theta\ttheta\, \prod_{i=1}^\infty \Bigl( 1 - \theta^{1-q^i} \Bigr)^{-1},
\]
is the fundamental period of Carlitz \cite[Ch.~3]{Goss}.  Set $L_0
:= 1$ and
\[
L_i := \prod_{j=1}^{i} \bigl( \theta-\theta^{q^j} \bigr), \quad i =
1, 2, \ldots.
\]
The Carlitz logarithm is defined by the power series
\[
  \log_C(z) := \sum_{i=0}^\infty \frac{z^{q^i}}{L_i},
\]
which converges $\infty$-adically for all $z \in \CC_\infty$ with
$|z|_\infty < |\theta|_\infty^{\frac{q}{q-1}}$.  It satisfies the
functional equation
\[
  \theta\log_C(z) = \log_C(\theta z) + \log_C(z^q),
\]
whenever the values in question are defined.  As formal power series
$\exp_C(z)$ and $\log_C(z)$ are inverses of each other with respect
to composition.

For a positive integer $n$, the $n$-th Carlitz polylogarithm is
defined by the series
\begin{equation} \label{E:CarlitzPolyLog}
  \log_C^{[n]}(z) := \sum_{i=0}^\infty \frac{z^{q^i}}{L_i^n},
\end{equation}
which converges $\infty$-adically for all $z \in \CC_\infty$ with
$|z|_\infty < |\theta|_\infty^{\frac{nq}{q-1}}$.  It can be checked
that $\log_C^{[n]}(z)$ is injective on its domain of convergence.

To connect the Carlitz theory to $t$-motives, we consider
\[
  \Omega(t) := \ttheta^{-q} \prod_{i=1}^\infty \biggl( 1 - \frac{t}{\theta^{q^i}} \biggr) \in \EE.
\]
It satisfies the functional equation
\begin{equation} \label{E:OmegaFnEq}
  \Omega^{(-1)} = (t-\theta)\Omega
\end{equation}
and $\Omega(\theta) = -1/\tpi$ (see \cite[\S 5.1]{ABP} for details).
The Carlitz motive $C$ is the $t$-motive with underlying vector
space $\ok(t)$ itself and with $\sigma$-action given by
\[
  \sigma g = (t-\theta)g^{(-1)}, \quad g \in \ok(t).
\]
Thus in this case $r=1$ and $\Phi = t-\theta$, and $\Omega$ provides a
rigid analytic trivialization by \eqref{E:OmegaFnEq}.  For $n\geq
1$, the $n$-th tensor power $C^{\otimes n} := C \otimes_{\ok(t)}
\cdots \otimes_{\ok(t)} C$ of the Carlitz motive also has $\ok(t)$
as its underlying space, but with $\sigma$-action $\sigma g =
(t-\theta)^n g^{(-1)}$, for $g \in \ok(t)$.  Thus $\Omega^n$ is a
rigid analytic trivialization of $C^{\otimes n}$.  By
Theorem~\ref{T:PapanikolasGalGrp}, it follows that the Galois group
of $C^{\otimes n}$ is $\Gm$ over $\FF_q(t)$, since $\Omega$ is
transcendental over $\ok(t)$.

Finally we note that the Carlitz polylogarithms are also
specialization at $t=\theta$ of elements from $\TT$.  To see this, fix a positive
integer $n$, and choose any $\alpha \in \ok^{\times}$ with
$|\alpha|_\infty < |\theta|_\infty^{\frac{nq}{q-1}}$.  We consider
the power series
\begin{equation}\label{E:Lalpha}
  L_{\alpha,n}(t) := \alpha + \sum_{i=1}^\infty \frac{\alpha^{q^i}}{(t-\theta^q)^n (t-\theta^{q^2})^n \cdots (t-\theta^{q^i})^n} \quad \in \TT.
\end{equation}
Specializing at $t=\theta$, we see that $L_{\alpha,n}(\theta)$ is
exactly the polylogarithm $\log_C^{[n]}(\alpha)$ by
\eqref{E:CarlitzPolyLog}.  We observe that $L_{\alpha,n}$ satisfies
the functional equation,
\begin{equation} \label{E:LalphanFnEq}
  (\Omega^n L_{\alpha,n})^{(-1)} = \alpha^{(-1)}(t-\theta)^n \Omega^n + \Omega^n L_{\alpha,n},
\end{equation}
which is key for defining the the $\zeta$-motives in
\S\ref{S:ZetaMotive}.

\section{Special $\Gamma$-values and $t$-motives} \label{S:GammaMotives}
Throughout this section we fix $f \in A_+$ with positive degree, and
we let $\ell$ be the cardinality of $(A/f)^\times$.  We review here
information about the $t$-motives associated to special
$\Gamma$-values from \cite[\S 5--6]{ABP}, and at the end we describe
their Galois groups.  We apologize in advance for the confluence of
notation in $\Gamma$, $\Gamma_M$, and $\Gamma_\Psi$, so throughout
we attempt to be as clear as possible as to which ``$\Gamma$'' we
are using.

\subsection{Carlitz cyclotomic covers}  For $x \in k_\infty$, we
let $\be(x) := \exp_C(\tpi x)$, and as such $\be : k_\infty \to
k_\infty(\ttheta)$.  As is well-known, $k(\be(1/f))$ is a Galois
extension of $k$ with Galois group $(A/f)^\times$, and it is the
Carlitz analogue of a cyclotomic extension of $\QQ$.  The Galois
action of $(A/f)^\times$ on $k(\be(1/f))$ is induced by the Artin
automorphism; moreover, for $a \in A$ relatively prime to $f$, the
action of $a$ on $k(\be(1/f))$ is defined by
\[
  a :\be(1/f) \mapsto \be(a/f).
\]

Now given $a \in A$, write $a = \theta b + \epsilon$, with $b \in A$
and $\epsilon \in \FF_q$, and define recursively a polynomial
$C_a(t,z) \in \FF_q[t,z]$ by
\[
C_a(t,z) := \begin{cases}
0 & \textnormal{if $a=0$,} \\
C_b(t,tz+z^q)+\epsilon z & \textnormal{if $a \neq 0$.}
\end{cases}
\]
The polynomial $C_a(t,z)$ is called a division polynomial of $\be$,
and it has the following properties:
\[
  C_a(\theta,\be(x)) = \be(ax),\ C_a(t,C_b(t,z)) = C_{ab}(t,z), \quad
  \forall\, a, b \in A, x \in k_\infty.
\]
Furthermore,
\[
  C_f(\theta,z) = \prod_{\substack{a \in A \\ \deg a < \deg f}} \bigl( z - \be (a/f ) \bigr).
\]
Hence there is a unique factor $C_f^\star(t,z) \in \FF_q[t,z]$ of
$C_f(t,z)$ so that
\[
  C_f^\star(\theta,z) = \prod_{\substack{a \in A \\ \deg a < \deg f,\ (a,f)=1}}
  \bigl( z - \be (a/f) \bigr),
\]
and we can consider $C_f^\star(t,z)$ to be the $f$-th cyclotomic
polynomial.  One finds that $C_f(t,z)$ is irreducible in
$\FF_q[t,z]$ and remains so in $\ok[t,z]$; and that the rings $\bR_f
:= \FF_q[t,z]/(C_f^\star(t,z))$ and $\bS_f :=
\ok[t,z]/(C_f^\star(t,z))$ are Dedekind domains.

Let $U_f/\FF_q$ be the affine curve whose coordinate ring is
$\bR_f$, and let $X_f/\FF_q$ be its non-singular projective model.
The infinite points of $X_f$ are defined to be the closed points in
$X_f \setminus U_f$, and they are all $\FF_q$-rational.  For $a \in
A$ relatively prime to $f$, set
\[
  \xi_a := \bigl( \theta, \be(a/f) \bigr),
\]
and note that the collection $\{ \xi_a \}$ is the collection of
$\ok$-points of $U_f$ above the point $t=\theta$ on the affine
$t$-line. Let
\[
  \oU_f := \ok \times_{\FF_q} U_f, \quad \oX_f:= \ok \times_{\FF_q} X_f,
\]
be the scalar extensions to $\ok$.  Since $C_f^\star(t,z)$ is
absolutely irreducible, we see that $X_f(\ok) = \oX_f(\ok)$ and that
$U_f(\ok) = \oU_f(\ok)$.  For $n \in \ZZ$, the $n$-fold twisting
operation extends in a natural way to $\bS_f$ that leaves $\bR_f$
fixed.  This action induces an action $x \mapsto x^{(n)}$ on
$\oX_f(\ok)$, which raises the coordinates of $x$ to the $q^n$-th
power.

\subsection{Geometric constructions for $t$-motives}
Let $\cA_f$ be the free abelian group on symbols of the form $[x]$,
where $x \in \smfrac{1}{f}A/A$.  Every $\ba \in \cA_f$ has a unique
expression of the form
\[
  \ba = \sum_{\substack{a \in A \\ \deg a < \deg f}} m_a [ a/f ],
  \quad m_a \in \ZZ.
\]
If all of the coefficients $m_a$ are non-negative, then we say that
$\ba$ is effective.  Let $\wt : \cA_f \to \ZZ[1/(q-1)]$ be the
unique group homomorphism such that for $x \in \frac{1}{f}A/A$,
\[
  \wt [x] = \begin{cases}
  0 & \textnormal{if $x \in A$,} \\
  \frac{1}{q-1} & \textnormal{if $x \notin A$.}
  \end{cases}
\]
For each $a \in A$ relatively prime to $f$, there exists a unique
automorphism $(\ba \mapsto a \star \ba) : \cA_f \to \cA_f$ of
abelian groups such that
\[
  a \star [x] = [ax], \quad x \in \smfrac{1}{f}A/A.
\]
Hence $(A/f)^\times$ acts on $\cA_f$ via $\star$.  Finally we define
\[
  \Pi(z) := z\Gamma(z) = \prod_{a \in A_+} \biggl( 1 + \frac{z}{a} \biggr)^{-1},
\]
which is sometimes called the ``geometric factorial'' function, and
for $\ba \in \cA_f$ we define $\Pi(\ba) \in \CC_\infty^\times$ so
that
\[
\Pi([x]) = \Pi(x), \quad x \in \smfrac{1}{f}A/A,\ |x|_\infty < 1.
\]
The elements of the image of $\Pi$ on $\cA_f$ are called
$\Pi$-monomials of level $f$.

For all $x \in k_\infty$ and integers $N \geq 0$, we define the
\emph{diamond bracket} by setting,
\[
  \langle x \rangle_N := \begin{cases}
  1 & \textnormal{if $\inf_{a\in A} |x-a-\theta^{-N-1}|_\infty < |\theta|_\infty^{-N-1}$,} \\
  0 & \textnormal{otherwise,}
  \end{cases}
\]
and
\[
  \langle x \rangle := \sum_{N=0}^\infty \langle x \rangle_N.
\]
The sum on the right has at most one non-zero term, and in
particular converges, and the value $\langle x \rangle$ is either
$0$ or $1$.  We can extend $\langle \cdot \rangle_N$ to $\cA_f$ by
setting
\[
  \langle [x] \rangle_N = \langle x \rangle_N, \quad x \in \smfrac{1}{f}A/A,
\]
and set
\[
  \langle \ba \rangle  = \sum_{N=0}^\infty \langle \ba \rangle_N, \quad \ba \in \cA_f.
\]

Fix an effective $\ba \in \cA_f$, with $\wt\ba > 0$.  We define
effective divisors of $\oX_f$ by the formulas,
\[
  \xi_\ba := \sum_{\substack{a \in A \\ \deg a < \deg f,\ (a,f)=1}}
  \langle a \star \ba \rangle \cdot \xi_a, \quad
  W_\ba := \sum_{\substack{a \in A \\ \deg a < \deg f,\ (a,f) = 1}}
  \sum_{N=1}^\infty \langle a \star \ba \rangle_N \cdot \sum_{i=0}^{N-1} \xi_a^{(i)}.
\]
Let $\infty_{X_f}$ be the divisor sum of the infinite points of
$\oX_f$ multiplied by $q-1$. By \cite[\S 6.3.9]{ABP}, one can use Anderson's solitons \cite{Anderson92} to define
a function $g_\ba$ on $\oX_f$, which is a rational
function that is regular on $\oU_f$ and has divisor
\[
  \divf(g_\ba) = -(\wt \ba) \cdot \infty_{X_f} + \xi_\ba + W_\ba^{(1)} - W_\ba.
\]
These functions provide geometric interpolations of
$\Pi$-monomials by way of the infinite product,
\[
  \Pi(a \star \ba)^{-1} = \prod_{N=1}^\infty g_\ba^{(N)} (\xi_a),
\]
which converges in $\CC_\infty$.

Now let $\tilde{H}(\ba)$ be the left $\ok[t,\sigma]$-module, whose
underlying $\ok[t]$-module is $\bS_f$ with $\sigma$-action given by
\[
  \sigma h = g_\ba h^{(-1)}, \quad h \in \tilde{H}(\ba).
\]
Define
\[
  H(\ba) := H^0(\oU_f,\cO_{\oX_f} (-W_\ba^{(1)})) \subseteq H^0(\oU_f,\cO_{\oX_f}) = \tilde{H}(\ba),
\]
which is, as an ideal of $\bS_f$, projective of rank one over
$\bS_f$ and free of rank $\ell$ over $\ok[t]$.  The module $H(\ba)$
is an Anderson $t$-motive by \cite[\S 6.4.2]{ABP}.  Now since
$\bR_f$ commutes with $\sigma$, it follows that there is a natural
inclusion
\[
  \bR_f \subseteq \End_{\ok[t,\sigma]}(H(\ba)).
\]
The main theorem of \cite{ABP} that describes the essential
properties of $H(\ba)$ and its connections to special
$\Gamma$-values is the following.

\begin{theorem}[Anderson-Brownawell-Papanikolas {\cite[Prop.~6.4.4]{ABP}}] \label{T:ABPPhiPsiPi}
Let $\ba \in \cA_f$ be effective with $\wt \ba > 0$, and let
$H(\ba)$ be the Anderson $t$-motive defined above.  Suppose that
$\Phi_\ba \in \Mat_{\ell}(\ok[t])$ represents multiplication by
$\sigma$ on $H(\ba)$ with respect to any $\ok[t]$-basis.  Then there
exists a rigid analytic trivialization $\Psi_\ba \in \GL_{\ell}(\TT)
\cap \Mat_{\ell}(\EE)$ of $\Phi_\ba$ with the property that the sets
\[
 \{ \Psi_\ba(\theta)_{ij} \mid i, j = 1,\dots, \ell \}, \quad
 \{\Pi(a \star \ba)^{-1} \mid a \in A,\ (a,f) = 1 \},
\]
span the same $\ok$-subspace of $\overline{k_\infty}$.  In
particular, $H(\ba)$ is a rigid analytically trivial Anderson
$t$-motive, and a transcendence basis of $\ok(\Psi_\ba(\theta)_{ij}
\mid i, j = 1, \dots, \ell)$ over $\ok$ can be chosen among elements
of the set of $\Pi$-monomials $\{ \Pi(a \star \ba)^{-1} \mid a \in
A,\ (a,f) = 1 \}$ that are linearly independent over $\ok$.
\end{theorem}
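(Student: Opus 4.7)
The plan is to construct $\Psi_\ba$ explicitly from Anderson's soliton $g_\ba$, whose divisor formula was engineered precisely so that $\prod_{N=1}^\infty g_\ba^{(N)}(\xi_a) = \Pi(a \star \ba)^{-1}$. Geometrically, the $\ell$ points $\xi_a$ above $t = \theta$ extend to $\ell$ analytic sections of $\oX_f$ over a neighborhood of $\theta$, and evaluating the soliton along these sections and taking the infinite product converts this pointwise interpolation formula into an entire function of $t$.

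Concretely, fix an $\ok[t]$-basis $h_1, \dots, h_\ell$ of $H(\ba)$, giving $\Phi_\ba \in \Mat_\ell(\ok[t])$ with $\sigma(h_1, \dots, h_\ell)^T = \Phi_\ba (h_1, \dots, h_\ell)^T$. Since $\mathrm{Frac}(\bS_f)/\ok(t)$ is a Galois extension of degree $\ell$, each $a$ coprime to $f$ with $\deg a < \deg f$ determines an embedding $\tau_a : \mathrm{Frac}(\bS_f) \hookrightarrow \overline{\ok(t)}$ corresponding to the section of $\oX_f$ passing through $\xi_a$ at $t = \theta$. For each such $a$, I would set
\[
\omega_{\ba, a} := \prod_{N=1}^\infty \tau_a(g_\ba)^{(N)},
\]
and let $\Psi_\ba$ be the $\ell \times \ell$ matrix whose $(a,i)$ entry is $\omega_{\ba, a} \cdot \tau_a(h_i)$. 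Convergence of $\omega_{\ba, a}$ in $\EE$ follows from the divisor formula $\divf(g_\ba) = -(\wt \ba) \cdot \infty_{X_f} + \xi_\ba + W_\ba^{(1)} - W_\ba$: after $N$-fold twisting the finite zeros and poles of $\tau_a(g_\ba)^{(N)}$ are pushed out to $|t|_\infty \geq q^{q^N}$, so the factors tend uniformly to $1$ on every bounded region of $\CC_\infty$.

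The functional equation $\Psi_\ba^{(-1)} = \Phi_\ba \Psi_\ba$ is then verified directly: from $\omega_{\ba, a}^{(-1)} = \tau_a(g_\ba) \cdot \omega_{\ba, a}$ (immediate from the product) and $\sigma h_i = g_\ba h_i^{(-1)} = \sum_j (\Phi_\ba)_{ij} h_j$, the $(-1)$-twist of the $(a, i)$ entry becomes $\sum_j (\Phi_\ba)_{ij} \omega_{\ba, a} \tau_a(h_j)$, using that $\tau_a$ commutes with twisting because it is $\FF_q(t)$-linear. Invertibility in $\GL_\ell(\TT)$ follows from $\det \Phi_\ba = c(t - \theta)^s$ for some $c \in \ok^\times$ and $s \geq 0$: the functional equation forces $\det \Psi_\ba = c' \Omega^s$ for some $c' \in \ok^\times$, and $\Omega$ is a unit in $\TT$ since all its zeros $t = \theta^{q^i}$ satisfy $|\theta^{q^i}|_\infty > 1$.

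Finally, specializing at $t = \theta$ yields $\omega_{\ba, a}(\theta) = \prod_N g_\ba^{(N)}(\xi_a) = \Pi(a \star \ba)^{-1}$ and $\tau_a(h_i)(\theta) = h_i(\xi_a) \in \ok$, so $\Psi_\ba(\theta)_{a, i} = \Pi(a \star \ba)^{-1} \cdot h_i(\xi_a)$; this gives the inclusion of $\ok$-spans in one direction. For the reverse inclusion it suffices to exhibit, for each $a$, some $i$ with $h_i(\xi_a) \neq 0$, which holds because the ideal $H(\ba) \subset \bS_f$ vanishes only along $W_\ba^{(1)}$, whose support has $t$-coordinates in $\{\theta^{q^j}\}_{j \geq 2}$ and is therefore disjoint from every $\xi_a$; thus $H(\ba)$ localizes to the full local ring at each $\xi_a$. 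The ``in particular'' clause on transcendence bases is immediate from the equality of $\ok$-spans. The main technical obstacle will be the divisor-theoretic bookkeeping needed to rigorously justify convergence of $\omega_{\ba, a}$ in $\EE$ and to identify $\det \Psi_\ba$ up to a unit — both rest on the full soliton apparatus developed in \cite{Anderson92} and \cite[\S 6]{ABP}.
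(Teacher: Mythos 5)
The paper does not prove this statement; it is Proposition 6.4.4 of \cite{ABP} and is quoted with a citation as background input for the paper's own arguments. So there is no ``paper's proof'' to compare against. That said, your sketch is a reasonable reconstruction of the spirit of the ABP proof — the rigid analytic trivialization is indeed built from the Coleman-style infinite products $\prod_{N\geq 1} g_\ba^{(N)}$ evaluated over the $\ell$ points of $\oX_f$ above $t=\theta$, and the diagonal specialization $\omega_{\ba,a}(\theta) = \Pi(a\star\ba)^{-1}$ is what drives the span equality. However, there is one step that is asserted too quickly and, as written, is actually false.

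You justify the functional equation $\Psi_\ba^{(-1)}=\Phi_\ba\Psi_\ba$ by claiming ``$\tau_a$ commutes with twisting because it is $\FF_q(t)$-linear.'' $\FF_q(t)$-linearity does not give this. The $(-1)$-twist on $\bS_f$ fixes $t$ and $z$ and takes $q^{-1}$-th powers of $\ok$-coefficients, while the $(-1)$-twist on the ambient receiving ring (whatever it is: $\LL$, $\overline{\ok(t)}$, a ring of sections) acts on Laurent coefficients. For $\tau_a$ to intertwine these, you would need $\tau_a(z)$ to be fixed by the ambient twist, i.e.\ $\tau_a(z)\in\LL^{\sigma=1}=\FF_q(t)$. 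But $\tau_a(z)$ specializes at $t=\theta$ to $\be(a/f)\notin k$ (for $\deg f>0$ and $q>2$), whereas any element of $\FF_q(t)$ specializes into $k$. So $\tau_a(z)\notin\FF_q(t)$ and $\tau_a$ does \emph{not} commute with the twist. Indeed the paper itself records that twisting moves the points $\xi_a\mapsto\xi_a^{(n)}$ off the fiber over $t=\theta$, which already signals that your ``sections through $\xi_a$'' cannot be individually twist-stable: twisting should \emph{permute} the sections $\tau_a$ rather than fix each one. Making your argument rigorous would require tracking that permutation and showing that the resulting permuted system of scalar equations reassembles into the matrix equation $\Psi_\ba^{(-1)}=\Phi_\ba\Psi_\ba$ — this is doable, but it is the genuine content of the ABP construction, not a formal consequence of linearity. (Two smaller issues: (i) with your index conventions the computation actually yields $\Psi_\ba^{(-1)}=\Psi_\ba\Phi_\ba^{T}$, so you want the transpose of your matrix; (ii) the reverse span inclusion needs the full matrix $\bigl(h_i(\xi_a)\bigr)_{i,a}$ to be invertible over $\ok$, not merely nonvanishing of some $h_i$ at each $\xi_a$ — that does follow from $H(\ba)\otimes_{\ok[t]}\ok[t]/(t-\theta)\cong\bS_f/(t-\theta)\cong\prod_a\ok$, but it needs to be said.)

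Since the statement is imported as a black box rather than proved in the paper, these gaps do not affect the paper; but as a standalone proof of the cited proposition, the twist-equivariance step would need to be replaced by the actual soliton/Coleman-function bookkeeping of \cite[\S6]{ABP}.
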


\subsection{The $\Gamma$-motive $M_f$} \label{S:GammaMotive}
For an effective $\ba \in \cA_f$ with $\wt \ba > 0$, we define
$M_\ba := \ok(t) \otimes_{\ok[t]} H(\ba)$, which is a $t$-motive in
the sense of \S\ref{S:GalGroups}.

\begin{proposition} \label{P:GammaMaTorus}
The Galois group $\Gamma_{M_\ba}$ is a torus over $\FF_q(t)$.
\end{proposition}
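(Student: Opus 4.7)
My approach is to exploit the complex multiplication on $M_\ba$ coming from the Carlitz $f$-th cyclotomic ring. The excerpt gives the inclusion $\bR_f = \FF_q[t,z]/(C_f^\star(t,z)) \subseteq \End_{\ok[t,\sigma]}(H(\ba))$. Tensoring $H(\ba) \to M_\ba$ along $\ok[t] \to \ok(t)$ extends this to an embedding of the Carlitz cyclotomic field
\[
  \mathbf{K}_f := \FF_q(t)[z]/(C_f^\star(t,z))
\]
into the endomorphism ring of $M_\ba$ as a pre-$t$-motive. Here $\mathbf{K}_f$ is a field of degree $\ell = \#(A/f)^\times$ over $\FF_q(t)$, since $C_f^\star(t,z)$ is irreducible (equivalently, $\bR_f$ is a domain). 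By the Tannakian formalism of Remark~\ref{R:TautRep}, the tautological action of $\Gamma_{M_\ba}$ on $M_\ba^\rB$ commutes with every endomorphism of $M_\ba$, so the image of $\Gamma_{M_\ba}$ in $\GL(M_\ba^\rB)$ lies inside the centralizer of $\mathbf{K}_f$.

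To identify this centralizer I analyze the $\mathbf{K}_f$-module structure of $M_\ba^\rB$. Since $H(\ba)$ is projective of rank one over $\bS_f$, base change shows that $M_\ba = \ok(t) \otimes_{\ok[t]} H(\ba)$ is free of rank one over the field $\ok(t)[z]/(C_f^\star(t,z)) = \ok(t) \otimes_{\FF_q(t)} \mathbf{K}_f$, so $\dim_{\ok(t)} M_\ba = \ell$. Applying the Betti fiber functor via the isomorphism $\LL \otimes_{\FF_q(t)} M_\ba^{\rB} \iso M_\ba^\dagger$ then yields $\dim_{\FF_q(t)} M_\ba^{\rB} = \ell = [\mathbf{K}_f : \FF_q(t)]$. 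As $M_\ba^{\rB}$ is a nonzero module over the field $\mathbf{K}_f$ of this dimension, it must be free of rank one over $\mathbf{K}_f$. Consequently, the centralizer of $\mathbf{K}_f$ in $\GL_{\FF_q(t)}(M_\ba^{\rB})$ is precisely the torus $T := \Res_{\mathbf{K}_f/\FF_q(t)} \Gm$, and $\Gamma_{M_\ba}$ is a closed subgroup scheme of $T$.

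By Theorem~\ref{T:PapanikolasGalGrp}(b), $\Gamma_{M_\ba}$ is smooth and absolutely irreducible, hence geometrically connected. Any smooth connected closed subgroup scheme of a torus is itself a torus: every closed subgroup scheme of a torus is of multiplicative type, and smoothness plus connectedness force the character group to be torsion-free. This yields the proposition. The main subtlety is verifying the rank-one assertion for $M_\ba^{\rB}$ over $\mathbf{K}_f$; once the correct ``coefficient field'' $\mathbf{K}_f$ is identified and the Betti realization is shown to be free of rank one over it, the torus conclusion is purely formal.
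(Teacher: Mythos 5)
Your proof is correct and follows essentially the same approach as the paper: exploit the CM action of the Carlitz cyclotomic field $R_f = \FF_q(t) \otimes_{\FF_q[t]} \bR_f$ (which is your $\mathbf{K}_f$) on $M_\ba$, use Tannakian functoriality to place $\Gamma_{M_\ba}$ inside the centralizer of this field in $\GL(M_\ba^{\rB})$, identify that centralizer with $\Res_{R_f/\FF_q(t)} \Gm$, and conclude. The one small difference is cosmetic: the paper passes directly from $[R_f:\FF_q(t)] = \ell$ and the embedding $R_f \hookrightarrow \Mat_\ell(\FF_q(t))$ to the statement that $\Res_{R_f/\FF_q(t)} \Gm$ is a maximal torus equal to its own centralizer, whereas you establish the same fact by first verifying explicitly that $M_\ba^{\rB}$ is free of rank one over $\mathbf{K}_f$ via the rank-one projectivity of $H(\ba)$ over $\bS_f$. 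You are also somewhat more careful than the paper at the final step, invoking smoothness and absolute irreducibility from Theorem~\ref{T:PapanikolasGalGrp}(b) to rule out non-smooth or disconnected subgroup schemes of the torus; the paper leaves this implicit.
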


\begin{proof}
Let $\Phi_\ba \in \Mat_{\ell}(\ok[t])$ represent multiplication by
$\sigma$ on $M_\ba$ with respect to a $\ok(t)$-basis $\bm \in
\Mat_{\ell \times 1}(M_\ba)$ of $M_\ba$, which is the scalar
extension of a fixed $\ok[t]$-basis of $H(\ba)$. Let $\Psi_\ba \in
\GL_\ell(\TT) \cap \Mat_\ell(\EE)$ be a rigid analytic
trivialization of $\Phi_\ba$ as in Theorem~\ref{T:ABPPhiPsiPi}.
Finally, let $\End(M_\ba)$ denote the ring of endomorphisms of
$M_\ba$ as a left $\ok(t)[\sigma,\sigma^{-1}]$-module.

For $\bff \in \End(M_\ba)$ there is a matrix $F \in
\Mat_\ell(\ok(t))$ so that $\bff(\bm) = F\bm$.  Since $\bff \sigma =
\sigma \bff$, we have $\Phi_\ba F = F^{(-1)}\Phi_\ba$, from which it
follows that the matrix $\Psi_\ba^{-1}F\Psi_\ba \in \Mat_\ell(\LL)$
is fixed by $\sigma$ and hence $\Psi_\ba^{-1}F\Psi_\ba \in
\Mat_\ell(\FF_q(t))$.  Therefore, we have an injective map,
\[
  \bigl(\bff \mapsto \bff^{\rB} := \Psi_\ba^{-1} F \Psi_\ba\bigr) :
  \End(M_\ba) \hookrightarrow \End_{\FF_q(t)}(M_\ba^{\rB}) = \Mat_\ell(\FF_q(t)).
\]
Since the tautological representation $\varphi : \Gamma_{M_\ba}
\hookrightarrow \GL(M_\ba^{\rB})$ is functorial in $M_\ba$ (cf.\
\cite[Thm.~4.5.3]{Papanikolas}), it follows that for an
$\FF_q(t)$-algebra $R$, any $\gamma \in \Gamma_{M_\ba}(R)$, and any
$\bff \in \End(M_{\ba})$, we have the following commutative diagram.
\[
\xymatrix{R \otimes_{\FF_q(t)}M_\ba^{B} \ar[r]^{\varphi(\gamma)}
 \ar@{->}[d]^{1\otimes \bff^{B}}
 & R\otimes_{\FF_q(t)}M_\ba^{B} \ar@{->}[d]^{1\otimes \bff^{B}}\\
 R\otimes_{\FF_q(t)}M_\ba^{B}
 \ar[r]^{\varphi(\gamma)}& R\otimes_{\FF_q(t)}M_\ba^B.}
\]
Therefore, we have a natural embedding
\begin{equation} \label{E:GammatoCent1}
\Gamma_{M_\ba}(R) \hookrightarrow \Cent_{\GL_\ell(R)} (R
\otimes_{\FF_q(t)} \End(M_\ba)).
\end{equation}

Now since $\bR_f \subseteq \End_{\ok[t,\sigma]}(H(\ba))$, we have
\[
  R_f := \FF_q(t) \otimes_{\FF_q[t]} \bR_f \hookrightarrow \FF_q(t) \otimes_{\FF_q[t]} \End_{\ok[t,\sigma]}(H(\ba)) \cong \End(M_\ba),
\]
where the right-most isomorphism is from
\cite[Prop.~3.4.5]{Papanikolas}.  Therefore, for any
$\FF_q(t)$-algebra $R$, we have an injection
\[
  R \otimes_{\FF_q(t)} R_f \hookrightarrow R \otimes_{\FF_q(t)} \End(M_\ba),
\]
and so from \eqref{E:GammatoCent1} we have
\begin{equation} \label{E:GammatoCent2}
  \Gamma_{M_\ba}(R) \hookrightarrow \Cent_{\GL_\ell(R)} ((R \otimes_{\FF_q(t)} R_f)^\times).
\end{equation}
Let $\Res_{R_f/\FF_q(t)} \Gm$ be the Weil restriction of scalars for
$\Gm_{/ R_{f}}$, and note that for any $\FF_q(t)$-algebra $R$,
\[
  (\Res_{R_f/\FF_q(t)} \Gm)(R) = (R \otimes_{\FF_q(t)} R_f)^\times.
\]
Since $[R_f:\FF_q(t)] = \ell$, we see that $\Res_{R_f/\FF_q(t)} \Gm$
is an $\ell$-dimensional maximal torus inside $\GL_{\ell/\FF_q(t)}$,
and hence
\[
  \Cent_{\GL_\ell}(\Res_{R_f/\FF_q(t)} \Gm ) = \Res_{R_f/\FF_q(t)} \Gm.
\]
{}From \eqref{E:GammatoCent2} we see that $\Gamma_{M_\ba}$ is a
torus.
\end{proof}

Let $E_f = \ok \bigl( \{\tpi\} \cup \{ \Gamma(r) \mid r \in
\frac{1}{f}A \setminus (\{0\} \cup -A_+ ) \} \bigr)$.  We choose a
finite subset $B_f$ of $\cA_f$ whose elements are effective and of
positive weight so that
\begin{equation} \label{E:EfSimplified}
  E_f = \ok \bigl( \cup_{\ba \in B_f} \{ \Pi(a \star \ba)^{-1} \mid a \in A,\ (a,f) = 1\} \bigr).
\end{equation}
For each $\ba \in B_f$, let $\Phi_\ba$ and $\Psi_\ba$ be given as in
Theorem~\ref{T:ABPPhiPsiPi}.  Defining
\[
  M_f := \oplus_{\ba \in B_f} M_\ba,
\]
we see that multiplication by $\sigma$ on $M_f$ is represented by
$\Phi_f  := \oplus_{\ba \in B_f} \Phi_\ba$ and has rigid analytic
trivialization $\Psi_f := \oplus_{\ba \in B_f} \Psi_\ba$.  Since by
Proposition~\ref{P:GammaMaTorus} each Galois group $\Gamma_{M_\ba}$
is a torus, we see that the Galois group $\Gamma_{M_f}$ is also a
torus because by \eqref{E:GammaPsiDefn} we have
\[
  \Gamma_{M_f} \subseteq \prod_{\ba \in B_f} \Gamma_{M_\ba}.
\]
On the other hand, Theorem~\ref{T:ABPPhiPsiPi} and
\eqref{E:EfSimplified} imply that $\ok(\Psi_f(\theta)) = E_f$, and
hence by Theorems~\ref{T:ABPMain} and~\ref{T:PapanikolasMain},
\[
\dim \Gamma_{M_f} = 1 + \frac{q-2}{q-1} \cdot \# (A/f)^\times.
\]

\section{Carlitz $\zeta$-values and $t$-motives}

\subsection{Carlitz polylogarithms and $\zeta$-values}
\label{S:PolyZeta} Let $L_{\alpha,n}(t)$ be the series introduced in
(\ref{E:Lalpha}). We recall the following theorem of Anderson and
Thakur expressing $\zeta_{C}(n)$ in terms of Carlitz polylogarithms.

\begin{theorem}[Anderson-Thakur {\cite[\S 3.7--8]{AndersonThakur}}] \label{T:AndThak}
Given any positive integer $n$, there is a finite sequence
$h^{[n]}_1, \dots, h^{[n]}_{\ell_n} \in k$ with $\ell_n <
\frac{nq}{q-1}$, such that
\begin{equation} \label{E:AndThak}
  \zeta_C(n) = \sum_{i=0}^{\ell_n} h^{[n]}_i \log_C^{[n]}(\theta^i)= \sum_{i=0}^{\ell_n} h^{[n]}_i L_{\theta^i,n}(\theta) .
\end{equation}
\end{theorem}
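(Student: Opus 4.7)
The plan is to realize $\zeta_C(n)$ as the last coordinate of the logarithm of an algebraic point on the $n$-th tensor power $C^{\otimes n}$ of the Carlitz module, and then decompose that point so that each piece contributes a single polylogarithm $\log_C^{[n]}(\theta^i)$.

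Recall that $C^{\otimes n}$ is an $n$-dimensional $t$-module; on its Lie algebra, $t$ acts by the Jordan block $\theta I + N$, where $N$ is nilpotent with $1$s on the superdiagonal. Writing $\exp_{C^{\otimes n}}$ as a formal power series and solving recursively for its coefficients using $[\theta] \circ \exp_{C^{\otimes n}} = \exp_{C^{\otimes n}} \circ (\theta I + N)$, a direct check shows that when $\exp_{C^{\otimes n}}$ is applied to a vector of the form $(0,\dots,0,z)^{\mathrm{T}}$, the last coordinate of the result is precisely $\sum_{i \ge 0} z^{q^i}/L_i^n$. Consequently $\log_{C^{\otimes n}}((0,\dots,0,\alpha)^{\mathrm{T}})$ has last coordinate $\log_C^{[n]}(\alpha)$ whenever $\alpha$ lies in the disk of convergence of $\log_C^{[n]}$.

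The central step is to construct the Anderson--Thakur polynomials $H_{n-1}(t) \in A[t]$, defined via a specific generating function chosen so that, upon evaluation at $t = \theta$, the vector $\mathbf{v}_n \in \Lie(C^{\otimes n})(k_\infty)$ with entries dictated by $H_{n-1}(\theta)$ has last coordinate equal to $H_{n-1}(\theta)\,\zeta_C(n)$ and maps under $\exp_{C^{\otimes n}}$ to a vector $\mathbf{u}_n \in C^{\otimes n}(A)$. An elementary estimate on the generating function gives $\deg_t H_{n-1} < nq/(q-1) - 1$, which ensures that $\mathbf{u}_n$ can be written as a finite $k$-linear combination $\sum_{i=0}^{\ell_n} c_i (0,\dots,0,\theta^i)^{\mathrm{T}}$ with $\ell_n < nq/(q-1)$ and each summand in the domain of convergence of $\log_{C^{\otimes n}}$. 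Applying $\log_{C^{\otimes n}}$ to this decomposition, extracting the last coordinate via the previous paragraph, and dividing by $H_{n-1}(\theta) \in k^\times$ yields
\[
  \zeta_C(n) = \sum_{i=0}^{\ell_n} h_i^{[n]} \log_C^{[n]}(\theta^i), \qquad h_i^{[n]} := c_i/H_{n-1}(\theta) \in k.
\]
The second equality in \eqref{E:AndThak} is then immediate from comparing \eqref{E:Lalpha} and \eqref{E:CarlitzPolyLog}, which give $L_{\theta^i,n}(\theta) = \log_C^{[n]}(\theta^i)$.

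The main obstacle is the combinatorial identity verifying that $\mathbf{v}_n$ truly has last coordinate $H_{n-1}(\theta)\,\zeta_C(n)$ and that $\exp_{C^{\otimes n}}(\mathbf{v}_n)$ is $A$-integral. This requires matching the generating function defining $H_n$ against both the explicit coefficients of $\exp_{C^{\otimes n}}$ (governed by the Carlitz factorials $D_i$) and the power sums $\sum_{a \in A_+} a^{-n}$ simultaneously. The identity is delicate and forms the heart of Anderson--Thakur's construction; once it is secured, the decomposition into polylogarithms and the degree bound on $\ell_n$ follow by comparatively elementary manipulations in $\Lie(C^{\otimes n})$.
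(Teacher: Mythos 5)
The paper itself gives no proof of this theorem; it is cited directly to Anderson--Thakur~\cite[\S 3.7--8]{AndersonThakur}, so there is no internal argument to compare against. Evaluating your sketch on its own terms, it has the right scaffolding --- the $n$-th tensor power $C^{\otimes n}$, its exponential/logarithm, the Anderson--Thakur polynomial $H_{n-1}$ with the $nq/(q-1)$ degree bound, and the identification $L_{\theta^i,n}(\theta)=\log_C^{[n]}(\theta^i)$, which is correct --- but there are two substantive problems.

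First, in your second paragraph you assert that the last coordinate of $\exp_{C^{\otimes n}}$ applied to $(0,\dots,0,z)^{\mathrm T}$ is $\sum_{i\ge 0} z^{q^i}/L_i^n$. This is a logarithm-type series ($L_i$ in the denominators); the coefficients of $\exp_{C^{\otimes n}}$ are governed by the $D_i$, not the $L_i$. The fact you actually need is that the last coordinate of $\log_{C^{\otimes n}}((0,\dots,0,z)^{\mathrm T})$ is $\sum z^{q^i}/L_i^n=\log_C^{[n]}(z)$; with the sentence as written, the ``consequently'' does not follow, since inverting a polylogarithm-type series does not again yield a polylogarithm.

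Second, and more seriously, the decomposition in your third paragraph --- writing $\mathbf u_n=\sum_{i=0}^{\ell_n} c_i(0,\dots,0,\theta^i)^{\mathrm T}$ with $c_i\in k$ and then ``applying $\log_{C^{\otimes n}}$ to this decomposition'' --- does not go through as stated. For the right-hand side even to equal $\mathbf u_n$ you need $\mathbf u_n$ to be supported only in its last coordinate, which you have not argued and which is not a formal consequence of the log-algebraicity statement; the Anderson--Thakur integral point is not of this special shape in general. And even granting it, $\log_{C^{\otimes n}}$ is only $\FF_q$-linear, so it distributes over the sum only when $c_i\in\FF_q$, whereas the theorem requires coefficients $h_i^{[n]}\in k$. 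The passage from the $t$-module log-algebraicity to the $k$-linear polylogarithmic expansion is precisely the delicate content of Anderson--Thakur \S 3.7--3.8 (it uses the structure of $H_{n-1}(t)$ and the $\ok[t]$-module structure of $C^{\otimes n}$, not just $\FF_q$-linearity of $\log$), and your sketch elides it. You do flag the combinatorial heart as the ``main obstacle,'' which is fair, but the specific bridge you propose --- the naive decomposition into last-coordinate vectors --- would fail and should be replaced by the actual Anderson--Thakur mechanism.
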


Let $n$ be a positive integer not divisible by $q-1$.  We let
\[
N_n := \textnormal{$k$-span of $\{\tpi^n, \log_C^{[n]}(1),
\log_C^{[n]}(\theta), \dots, \log_C^{[n]}(\theta^{\ell_n}) \}$.}
\]
By \eqref{E:AndThak}, we have $\zeta_C(n) \in N_n$ and $\dim_k N_n
\geq 2$ since $\zeta_C(n)$ and $\tpi^n$ are linearly independent
over $k$ (as $(q-1)\nmid n$ implies $\tpi^n \notin k_\infty$).  Therefore we can
pick a non-negative integer $m_n$ with $m_n + 2 = \dim_k N_n$ and
distinct integers
\begin{equation} \label{E:indices1}
0 \leq \iota(0), \dots, \iota(m_n) \leq \ell_n
\end{equation}
so that both
\begin{equation} \label{E:indices2}
  \{ \tpi^n, \log_C^{[n]}(\theta^{\iota(0)}), \dots, \log_C^{[n]}(\theta^{\iota(m_n)}) \}, \quad
  \{ \tpi^n, \zeta_C(n), \log_C^{[n]}(\theta^{\iota(1)}), \dots
  \log_C^{[n]}(\theta^{ \iota(m_n)}) \},
\end{equation}
are $k$-bases of $N_n$.  This is possible by
Theorem~\ref{T:AndThak}.

\subsection{The $\zeta$-motive $M_{(s)}$} \label{S:ZetaMotive}
Continue with the choices in \S \ref{S:PolyZeta}.  We consider the
matrices $\Phi_n \in \GL_{m_n + 2}(\ok(t)) \cap\Mat_{m_n +
2}(\ok[t])$,
\[
\Phi_n := \left[\begin{matrix}
(t-\theta)^n & 0  & \cdots & 0 \\
\theta^{\iota(0)/q}(t-\theta)^n & 1 & \cdots & 0 \\
\vdots & \vdots & \ddots & \vdots \\
\theta^{\iota(m_n)/q}(t-\theta)^n & 0 & \cdots & 1
\end{matrix}\right],
\]
and $\Psi_n \in \GL_{m_n+2}(\TT)\cap \Mat_{m_n+2}(\EE)$ given by
\[
\Psi_n  := \left[\begin{matrix}
\Omega^n & 0  & \cdots & 0 \\
L_{\theta^{\iota(0)},n}\Omega^n & 1 & \cdots & 0 \\
\vdots & \vdots & \ddots & \vdots \\
L_{\theta^{\iota(m_n)},n}\Omega^n & 0 & \cdots & 1
\end{matrix}\right].
\]
By \eqref{E:OmegaFnEq} and \eqref{E:LalphanFnEq}, we have
\[
  \Psi_n^{(-1)} = \Phi_n\Psi_n.
\]
Note that all of the entries of $\Psi_n$ are in $\EE$ and that $\Phi_n$
defines a $t$-motive $M_n$ (see \cite[Lem.~A.1]{ChangYu}).

In the context of applying Theorem~\ref{T:PapanikolasMain}, we see
from our various choices arising from~\eqref{E:AndThak} that
$\zeta_C(n)/\tpi^n$ is a $k$-linear combination of the entries of the first
column of $\Psi_n(\theta)$, so in order to compare different
$\zeta$-values simultaneously we need to consider various $M_n$ and
$\Psi_n$ simultaneously.  To do this, we fix a positive integer $s$
and let
\[
  U(s) := \{ n \in \ZZ \mid 1 \leq n \leq s,\ p \nmid n,\ (q-1)\nmid n \}.
\]
We let $M_{(s)}$ be the direct sum $t$-motive $\oplus_{n \in U(s)}
M_n$ and define block diagonal matrices,
\begin{align*}
  \Phi_{(s)} &:= \oplus_{n \in U(s)} \Phi_n, \\
  \Psi_{(s)} &:= \oplus_{n \in U(s)} \Psi_n.
\end{align*}
Then $\Phi_{(s)}$ represents multiplication by $\sigma$ on
$M_{(s)}$, and $\Psi_{(s)}$ is a rigid analytic trivialization of
$\Phi_{(s)}$.  {}From \eqref{E:GammaPsiDefn} we see that any element
of $\Gamma_{\Psi_{(s)}}$ is of the form
\[
  \oplus_{n \in U(s)} \left[\begin{matrix}
  x^n & 0 \\ * & \Id_{m_n+1} \end{matrix}\right],
\]
where $\Id_r$ denotes the identity matrix of size $r$.

Now the Carlitz motive $C$ is a sub-$t$-motive of $M_1 \subseteq
M_{(s)}$, so by Tannakian duality there is a surjective map
\begin{equation} \label{E:piDefn}
  \pi : \Gamma_{\Psi_{(s)}} \twoheadrightarrow \Gm.
\end{equation}
Let $\cT_{M_{(s)}}$ and $\cT_C$ be the strictly full Tannakian
subcategories of the category $\cT$ of $t$-motives, which are
generated by $M_{(s)}$ and $C$ respectively.  The map $\pi$ comes
from the restriction of the fiber functor of $\cT_{M_{(s)}}$ to
$\cT_C$.  For any $\gamma \in \Gamma_{\Psi_{(s)}}(\oFqt)$, it
follows from the discussion in Remark~\ref{R:TautRep} that the
action of $\pi(\gamma)$ on $\oFqt \otimes_{\FF_q(t)} C^{\rB}$ is
equal to the action of the upper left-most corner of $\gamma$.  This
implies that $\pi$ is the projection on the upper left-most corner
of elements of $\Gamma_{\Psi_{(s)}}$.

Let $V_{(s)}$ be the kernel of $\pi$, giving an exact sequence of
linear algebraic groups,
\[
  0 \to V_{(s)} \to \Gamma_{\Psi_{(s)}} \stackrel{\pi}{\to} \Gm \to  1.
\]
We see that $V_{(s)}$ is contained in the vector group $G_{(s)}$,
consisting of all block diagonal matrices of the form,
\[
  \oplus_{n \in U(s)} \left[\begin{matrix}
  1 & 0 \\ * & \Id_{m_n+1} \end{matrix} \right],
\]
which is a unipotent group isomorphic to the direct product
$\prod_{n \in U(s)} \Ga^{m_n+1}$.

\begin{theorem}[Chang-Yu {\cite[Thm.~4.5, Cor.~4.6]{ChangYu}}, cf.\ Theorem~\ref{T:ChangYu}] \label{T:ChangYuMain}
Fix a positive integer $s$, and let $M_{(s)}$, $\Psi_{(s)}$,
$V_{(s)}$, and $G_{(s)}$ be defined as above.  Then $V_{(s)} =
G_{(s)}$, and $\Gamma_{M_{(s)}}$ is an extension of $\Gm$ by a
vector group. Hence
\[
  \dim \Gamma_{\Psi_{(s)}} = 1 + \sum_{n \in U(s)} (m_n+1).
\]
In particular, the quantities among
\[
  \{ \tpi \} \cup_{n \in U(s)} \cup_{i=0}^{m_n} \bigl\{ \log_C^{[n]}\bigl(\theta^{\iota(i)}\bigr) \bigr\}
\]
are algebraically independent over $\ok$.
\end{theorem}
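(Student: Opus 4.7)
The plan is to prove the structural statement $V_{(s)} = G_{(s)}$ first, and then deduce both the dimension formula and the algebraic independence from Theorem~\ref{T:PapanikolasMain}. Since $\Gamma_{\Psi_{(s)}}$ is absolutely irreducible by Theorem~\ref{T:PapanikolasGalGrp}(b), its closed normal subgroup $V_{(s)} = \ker \pi$ is connected, and hence, being a closed connected subgroup of the vector group $G_{(s)}$, is itself a vector subgroup.

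Next I would exploit the conjugation action of $\Gm$ on $V_{(s)}$ induced by any preimage in $\Gamma_{\Psi_{(s)}}$ of a point $x \in \Gm$. A direct block-matrix computation using the explicit shape of elements of $\Gamma_{\Psi_{(s)}}$ shows that this action scales coordinates on the $n$-th summand $\Ga^{m_n+1}$ of $G_{(s)}$ by $x^{-n}$. Since the integers $n \in U(s)$ are pairwise distinct, these characters are pairwise distinct, and any $\Gm$-stable vector subgroup of $G_{(s)}$ must decompose as a direct product of its weight spaces. Hence $V_{(s)} = \prod_{n \in U(s)} V_n$ with $V_n \subseteq \Ga^{m_n+1}$. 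Via the Tannakian surjection $\Gamma_{\Psi_{(s)}} \twoheadrightarrow \Gamma_{\Psi_n}$ induced by the direct summand $M_n \hookrightarrow M_{(s)}$, each $V_n$ is identified with the kernel of the analogous projection $\Gamma_{\Psi_n} \to \Gm$.

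The core of the argument, and the step I expect to be the main obstacle, is to show that each $V_n$ equals $\Ga^{m_n+1}$. Suppose for contradiction that $V_n$ is a proper closed subgroup; then there exists a nonzero $\FF_q(t)$-linear form $\lambda$ vanishing on $V_n$. Via the Tannakian dictionary of Papanikolas~\cite{Papanikolas} applied to the $t$-motive $M_n$, such a $\lambda$ corresponds to a nontrivial morphism from a subquotient of $M_n$ into the trivial motive, which after clearing denominators and iterating the functional equation $\Psi_n^{(-1)} = \Phi_n \Psi_n$ yields an $\ok[t]$-linear relation among the entries of the first column of $\Psi_n$. Specializing at $t = \theta$ and using $\Omega(\theta) = -1/\tpi$ together with $L_{\theta^{\iota(i)},n}(\theta) = \log_C^{[n]}(\theta^{\iota(i)})$, one extracts a nontrivial $k$-linear relation among $\tpi^n, \log_C^{[n]}(\theta^{\iota(0)}), \dots, \log_C^{[n]}(\theta^{\iota(m_n)})$, contradicting the choice made in~\eqref{E:indices2} that these values form a $k$-basis of $N_n$. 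The delicate point here is ensuring that the $\FF_q(t)$-linear relation descends cleanly to a $k$-linear relation after specialization, which is where the conditions $p \nmid n$ and $(q-1) \nmid n$ built into $U(s)$ enter the picture, preventing any collapse with the Frobenius $p$-th power or Euler-Carlitz relations.

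Assembling these pieces, $V_{(s)} = G_{(s)}$, so $\Gamma_{M_{(s)}} \cong \Gamma_{\Psi_{(s)}}$ is an extension of $\Gm$ by the vector group $G_{(s)}$, and its dimension is $1 + \sum_{n \in U(s)} (m_n+1)$. Applying Theorem~\ref{T:PapanikolasMain} to $\Psi_{(s)}$, whose entries specialize at $t = \theta$ to $k$-multiples of $\tpi$ and of the polylogarithm values $\log_C^{[n]}(\theta^{\iota(i)})$, then yields the stated algebraic independence of $\{\tpi\} \cup \{\log_C^{[n]}(\theta^{\iota(i)})\}$ over $\ok$.
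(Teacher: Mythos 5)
This theorem is \emph{cited} in the paper from Chang--Yu \cite[Thm.~4.5, Cor.~4.6]{ChangYu}; the paper itself gives no proof, only uses the result as input to Theorem~\ref{T:TransBases}. So there is no ``paper's own proof'' here to compare against, and your proposal is a reconstruction of the argument from the referenced paper. The overall architecture you describe (a $\Gm$-weight decomposition of the unipotent kernel, reduction to each $V_n$, and a contradiction with the $k$-basis choice in \eqref{E:indices2}) is the right shape, and your block-matrix computation showing that conjugation scales the $n$-th block by $x^{-n}$ is correct.

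That said, there are two genuine gaps. First, the opening step ``$\Gamma_{\Psi_{(s)}}$ is absolutely irreducible, hence its closed normal subgroup $V_{(s)} = \Ker \pi$ is connected, hence being a closed connected subgroup of $G_{(s)}$ it is a vector subgroup'' is not valid. The kernel of a surjection of connected groups need not be connected (e.g.\ $\mu_n \subset \Gm$ under $x \mapsto x^n$), and in characteristic $p$ a connected closed subgroup of $\Ga^N$ need not be $\FF_q(t)$-linear (e.g.\ $\{(x,y) : y = x^p\} \subset \Ga^2$). What actually forces $V_{(s)}$ to be a vector subspace, and to split along the weight grading, is the $\Gm$-action together with the smoothness of $\Gamma_{\Psi_{(s)}}$ from Theorem~\ref{T:PapanikolasGalGrp}(b); you invoke connectedness where you should be invoking the torus action and smoothness, and the weight-space decomposition of the \emph{subgroup} $V_{(s)}$ (rather than merely of the ambient $G_{(s)}$) has to be argued, not assumed.

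Second, in the step where you suppose $V_n$ is proper and derive a contradiction, the crucial point is how the purported $\FF_q(t)$-linear form on $V_n$, after passing through the difference equation $\Psi_n^{(-1)} = \Phi_n\Psi_n$ and specializing at $t=\theta$, produces a $k$-linear (rather than merely $\ok$-linear) dependence among $\tpi^n$ and the $\log_C^{[n]}(\theta^{\iota(i)})$. You flag this as ``the delicate point'' and gesture at the conditions $p\nmid n$, $(q-1)\nmid n$, but those conditions govern the choice of $U(s)$ (they remove the Frobenius and Euler--Carlitz redundancies) rather than effect the descent of coefficients from $\ok(t)$ to $\FF_q(t)$. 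The descent is exactly the content of the ABP linear-independence criterion, and it is the heart of the matter; as written your sketch hand-waves past it. A complete argument would either invoke that criterion explicitly or reproduce its Frobenius-bootstrapping argument to force the coefficients of the relation into $\FF_q(t)$, so that specializing $t\mapsto\theta$ lands in $k$.
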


\begin{remark} \label{R:ZetaTransBasis}
When we combine Theorem~\ref{T:ChangYuMain} with the choices we made
in \eqref{E:indices1}--\eqref{E:indices2}, we see that the field
$\ok(\Psi_{(s)}(\theta))$ has a transcendence basis $T\cup \{ \tpi
\}$ with $T\supseteq\{ \zeta_C(n) \mid n \in U(s) \}$.
\end{remark}

\section{Algebraic independence of $\Gamma$-values and $\zeta$-values}
\subsection{The main theorem}
To study the algebraic relations among $\Gamma$-values and
$\zeta$-values simultaneously, we now combine $\Gamma$-motives and
$\zeta$-motives.  Fix a positive integer $s$, and let $M_{(s)}$,
$\Phi_{(s)}$, and $\Psi_{(s)}$ be defined as in
\S\ref{S:ZetaMotive}.

\begin{theorem} \label{T:TransBases}
Assume that $q > 2$.  Suppose $M_0$ is a $t$-motive for which
multiplication by $\sigma$ is represented by $\Phi_0 \in
\Mat_\nu(\ok[t])$ with $\det\Phi_{0}=c(t-\theta)^{m}$, $c\in
\bar{k}^{\times}$, $m \geq 1$, and suppose that $\Psi_0 \in
\GL_\nu(\TT) \cap \Mat_\nu(\EE)$ is a rigid analytic trivialization
of $\Phi_0$. Suppose further that its Galois group $\Gamma_{\Psi_0}$
is a torus over $\FF_q(t)$.  Finally, let $M := M_{(s)} \oplus M_0$,
$\Phi := \Phi_{(s)} \oplus \Phi_0$, and $\Psi :=\Psi_{(s)} \oplus
\Psi_0$.
\begin{enumerate}
\item[(a)] We have $\dim \Gamma_M = \dim \Gamma_{\Psi} = \dim\Gamma_{\Psi_{(s)}} + \dim \Gamma_{\Psi_0} - 1$.
\item[(b)] If $T\cup \{\tpi\}$ is a transcendence basis for $\ok(\Psi_{(s)}(\theta))$ over $\ok$ and $S$ is any transcendence basis of $\ok(\Psi_0(\theta))$ over $\ok$, then the set $T \cup S$ is algebraically independent over~$\ok$.
\end{enumerate}
\end{theorem}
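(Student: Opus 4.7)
The plan is to compute $\dim \Gamma_M$ by a Goursat-type analysis of the natural closed embedding $\Gamma_M \hookrightarrow \Gamma_{M_{(s)}} \times \Gamma_{M_0}$ supplied by Tannakian duality, and then to deduce (b) by a transcendence-degree count using Theorem~\ref{T:PapanikolasMain}. Since $M = M_{(s)} \oplus M_0$ realizes each summand as both a sub- and a quotient $t$-motive of $M$, the coordinate projections $p_{(s)} : \Gamma_M \to \Gamma_{M_{(s)}}$ and $p_0 : \Gamma_M \to \Gamma_{M_0}$ are surjective. By Goursat's lemma, the image $\Gamma_M$ inside $\Gamma_{M_{(s)}} \times \Gamma_{M_0}$ is therefore a fiber product $\Gamma_{M_{(s)}} \times_Q \Gamma_{M_0}$ over some common algebraic quotient $Q$ of the two Galois groups, and $Q$ is connected by Theorem~\ref{T:PapanikolasGalGrp}(b).

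To identify $Q$, I first observe that $\Gamma_{M_0}$ is a torus by hypothesis, so $Q$ is a torus. By Theorem~\ref{T:ChangYuMain}, $\Gamma_{M_{(s)}}$ is an extension of $\Gm$ by the unipotent vector group $V_{(s)} = G_{(s)}$; since unipotent groups admit only the trivial homomorphism into a torus, every torus quotient of $\Gamma_{M_{(s)}}$ factors through $\Gm$. Hence $Q$ is a connected quotient of $\Gm$, forcing $Q = 1$ or $Q = \Gm$. To rule out $Q = 1$, I would exploit the determinant: the hypothesis $\det \Phi_0 = c(t-\theta)^m$ with $m \geq 1$, together with $(\Omega^m)^{(-1)} = (t-\theta)^m \Omega^m$, shows that $\det \Psi_0 = h(t)\,\Omega^m$ for some $h(t) \in \ok(t)^\times$ satisfying $h^{(-1)} = c\,h$. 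Consequently $\det M_0 \cong C^{\otimes m}$ as $1$-dimensional pre-$t$-motives, both with Galois group $\Gm$. Tannakian functoriality then yields the identity $(\pi \circ p_{(s)})^m = \det \circ\, p_0$ of characters $\Gamma_M \to \Gm$, since both sides compute the $\Gamma_M$-action on $C^{\otimes m}$. If $Q = 1$ held, we would have $\Gamma_M = \Gamma_{M_{(s)}} \times \Gamma_{M_0}$, and then specializing the $\Gamma_{M_0}$-factor to the identity would force $\pi(x)^m = 1$ for every $x \in \Gamma_{M_{(s)}}$, an absurdity since $\pi$ is surjective onto $\Gm$. Hence $Q = \Gm$, and the Goursat formula $\dim \Gamma_M = \dim \Gamma_{M_{(s)}} + \dim \Gamma_{M_0} - 1$, combined with Theorem~\ref{T:PapanikolasGalGrp}(c), yields part (a).

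For part (b), Theorem~\ref{T:PapanikolasMain} and part (a) give $\trdeg_{\ok} \ok(\Psi(\theta)) = (|T|+1) + |S| - 1 = |T|+|S|$. Specializing $\det \Psi_0 = h(t)\Omega^m$ at $t = \theta$ yields $\det \Psi_0(\theta) = h(\theta)\,(-\tpi)^{-m}$ with $h(\theta) \in \ok^\times$, so $\tpi^m$, and therefore $\tpi$, is algebraic over $\ok(\Psi_0(\theta))$ and hence over $\ok(S)$. Consequently $\ok(\Psi(\theta))$, which is a priori algebraic only over $\ok(T \cup \{\tpi\} \cup S)$, is in fact algebraic over $\ok(T \cup S)$. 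Since $|T \cup S| \le |T|+|S|$ matches the transcendence degree of $\ok(\Psi(\theta))$ over $\ok$, the set $T \cup S$ must be a transcendence basis---in particular $T \cap S = \emptyset$---and is algebraically independent over $\ok$. The most delicate step is ruling out $Q = 1$: one has to translate the Tannakian relationship between the Carlitz character on $\Gamma_{M_{(s)}}$ and the determinant character on $\Gamma_{M_0}$ into a concrete identity of characters on $\Gamma_M$, and then invoke connectedness together with the torus-vs.-unipotent dichotomy to conclude.
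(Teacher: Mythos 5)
Your proof is correct, but it takes a genuinely different route from the paper's. The paper avoids Goursat's lemma entirely: it notes that $\Gamma_\Psi$ is a connected solvable subgroup of $\Gamma_{\Psi_{(s)}} \times \Gamma_{\Psi_0}$, and obtains the lower bound $\dim\Gamma_\Psi \ge \dim\Gamma_{\Psi_{(s)}} + \dim\Gamma_{\Psi_0} - 1$ by observing that the surjection $\Gamma_\Psi \twoheadrightarrow \Gamma_{\Psi_{(s)}}$ forces the unipotent radical of $\Gamma_\Psi$ to have dimension $\ge \dim\Gamma_{\Psi_{(s)}} - 1$, while the surjection onto the torus $\Gamma_{\Psi_0}$ forces the maximal torus of $\Gamma_\Psi$ to have dimension $\ge \dim\Gamma_{\Psi_0}$; the matching upper bound comes purely from transcendence degrees, since $\tpi^m \in \ok(\Psi_0(\theta))$, $\tpi \in \ok(\Psi_{(s)}(\theta))$, and Theorem~\ref{T:PapanikolasMain} then caps $\dim\Gamma_\Psi$ by $\trdeg_{\ok}\ok(\Psi(\theta))$. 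Your approach instead works entirely on the group side: the Goursat fiber-product decomposition $\Gamma_M \cong \Gamma_{M_{(s)}} \times_Q \Gamma_{M_0}$, the torus-versus-unipotent dichotomy to show $Q$ factors through $\Gm$, and the character identity $(\pi\circ p_{(s)})^m = \det\circ p_0$ (coming from the pre-$t$-motive isomorphism $\det M_0 \cong C^{\otimes m}$, which requires solving $g^{(-1)}=cg$ in $\ok^\times$---fine since $\ok$ is algebraically closed) to exclude $Q=1$. The paper's upper bound is cheaper because it transfers the work to field theory via Theorem~\ref{T:PapanikolasMain}, whereas your argument locates the exact group-theoretic reason for the ``$-1$'' and makes the role of the determinant motive explicit; that structural insight is the main thing your route buys, at the cost of having to justify Goursat for algebraic group schemes (which is harmless here since $\Gamma_M$ is smooth and connected by Theorem~\ref{T:PapanikolasGalGrp}(b), so the quotient $Q$ is a well-defined connected algebraic group). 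Part (b) is the same transcendence-degree count in both treatments. One small point: the hypothesis $q>2$ is used (in both arguments) to guarantee $1 \in U(s)$, so that the Carlitz motive $C$ sits inside $M_1 \subseteq M_{(s)}$ and the character $\pi$ is actually available.
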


\begin{proof}
By \eqref{E:GammaPsiDefn} we have $\Gamma_{\Psi}\subseteq
\Gamma_{\Psi_{(s)}}\times \Gamma_{\Psi_0}$, so that $\Gamma_{\Psi}$
is a solvable group. By Tannakian duality there is surjective
morphism $\Gamma_\Psi \twoheadrightarrow \Gamma_{\Psi_{(s)}}$, and
it follows that the unipotent radical of $\Gamma_\Psi$ has dimension
$\ge\dim \Gamma_{\Psi_{(s)}}-1$, which is the dimension of the
unipotent radical of $\Gamma_{\Psi_{(s)}}$. On the other hand, we
also have a surjective morphism from $\Gamma_\Psi$ to the torus
$\Gamma_{\Psi_0}$, which forces the dimension of the maximal torus
of $\Gamma_{\Psi}$ to be at least $\dim \Gamma_{\Psi_{0}}$. Hence
$\dim \Gamma_{\Psi} \ge \dim\Gamma_{\Psi_{(s)}} + \dim
\Gamma_{\Psi_0} - 1$.  Observing now that $\det \Psi_0(\theta)$ is a
$\ok^\times$-multiple of $\tpi^{-m}$, we have $\tpi^m \in
\ok(\Psi_0(\theta))$. Since $\tpi$ is also in
$\ok(\Psi_{(s)}(\theta))$, Theorem~\ref{T:PapanikolasMain} gives
part (a).

Part (b) also follows from~(a) by using
Theorem~\ref{T:PapanikolasMain}.  To see this,
\[
  \ok(\Psi(\theta)) \supseteq \ok(T \cup \{\tpi\} \cup S) \supseteq \ok(T \cup \{ \tpi^m \} \cup S) = \ok(T \cup S).
\]
However, these two containments are each of finite degree, and thus
part (a) implies that $\trdeg_{\ok} \ok(T \cup S) = \dim
\Gamma_{\Psi_{(s)}} + \dim \Gamma_{\Psi_0} - 1 = \#T + 1 + \#S - 1 =
\#T + \#S$.
\end{proof}

Our main theorem is as follows.

\begin{theorem} \label{T:MainThmRedux}
Given $f \in A_+$ with positive degree and $s$ a positive integer,
the transcendence degree of the field
\[
  \ok \bigl( \{\tpi\} \cup \bigl\{ \Gamma(r) \bigm| r \in
  \smfrac{1}{f} A \setminus (\{0\} \cup -A_+) \bigr\}
  \cup \{ \zeta_C(1), \dots, \zeta_C(s) \} \bigr)
\]
over $\ok$ is
\[
  1 + \frac{q-2}{q-1}\cdot \#(A/f)^\times + s - \lfloor s/p \rfloor - \lfloor s/(q-1) \rfloor + \lfloor s/(p(q-1)) \rfloor.
\]
\end{theorem}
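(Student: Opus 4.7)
The plan is to combine the $\Gamma$-motive $M_f$ of \S\ref{S:GammaMotive} with the $\zeta$-motive $M_{(s)}$ of \S\ref{S:ZetaMotive} into the direct sum $M := M_{(s)} \oplus M_f$ and apply Theorem~\ref{T:TransBases} with $M_0 := M_f$. First I would dispose of the degenerate case $q=2$ separately, since Theorem~\ref{T:TransBases} assumes $q>2$: by the $q=2$ remarks in the introduction, every special $\Gamma$-value is an $\ok$-multiple of $\tpi$, and since $q-1 = 1$ every $\zeta_C(n)$ is a $k$-multiple of $\tpi^n$. Hence the field $E$ in the statement equals $\ok(\tpi)$, with transcendence degree $1$ over $\ok$, matching the claimed formula (which collapses to $1$ since $\frac{q-2}{q-1}=0$ and $\#U(s) = 0$).

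Assuming now that $q>2$, I would verify the three hypotheses on $M_0 = M_f$ in Theorem~\ref{T:TransBases}: (i) $\Phi_f \in \Mat(\ok[t])$ with $\det \Phi_f = c(t-\theta)^m$ for some $c \in \ok^\times$ and $m \geq 1$, a standard feature of Anderson $t$-motives inherited from $(t-\theta)^N H(\ba) \subseteq \sigma H(\ba)$; (ii) $\Psi_f \in \GL(\TT) \cap \Mat(\EE)$ is a rigid analytic trivialization of $\Phi_f$, by Theorem~\ref{T:ABPPhiPsiPi}; (iii) $\Gamma_{\Psi_f}$ is a torus, since by Proposition~\ref{P:GammaMaTorus} each $\Gamma_{M_\ba}$ is a torus, hence so is the product $\prod_{\ba \in B_f} \Gamma_{M_\ba}$, and by \eqref{E:GammaPsiDefn} together with the absolute irreducibility part of Theorem~\ref{T:PapanikolasGalGrp}(b), $\Gamma_{M_f}$ sits inside this product as a connected closed subgroup, hence is itself a torus.

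With the hypotheses in place, the lower bound is immediate from Theorem~\ref{T:TransBases}(b). By Remark~\ref{R:ZetaTransBasis} one may choose a transcendence basis $T \cup \{\tpi\}$ of $\ok(\Psi_{(s)}(\theta))$ with $T \supseteq \{\zeta_C(n) \mid n \in U(s)\}$, and one takes $S$ to be any transcendence basis of $\ok(\Psi_f(\theta)) = E_f$, so that $\#S = \dim\Gamma_{M_f} = 1 + \frac{q-2}{q-1}\cdot\#(A/f)^\times$ by the computation at the end of \S\ref{S:GammaMotive}. The theorem then gives that $T \cup S$ is algebraically independent over $\ok$, and hence so is the subset $\{\zeta_C(n) \mid n \in U(s)\} \cup S \subseteq E$, yielding $\trdeg_{\ok} E \geq \#U(s) + \dim \Gamma_{M_f}$. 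The matching upper bound is elementary: writing $E = E_f \cdot \ok(\zeta_C(1),\ldots,\zeta_C(s))$ and using $\tpi \in E_f$ together with Theorem~\ref{T:ChangYu}, one gets $\trdeg_{\ok} E \leq \dim\Gamma_{M_f} + \#U(s)$, and since $\#U(s) = s - \lfloor s/p \rfloor - \lfloor s/(q-1) \rfloor + \lfloor s/(p(q-1)) \rfloor$ by inclusion-exclusion, the two bounds coincide with the claimed value.

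The conceptual content of the result — that $\tpi$ is the only link between $\Gamma$-values and $\zeta$-values — is carried by Theorem~\ref{T:TransBases}, whose proof trades on the structural contrast between the torus $\Gamma_{\Psi_f}$ and the extension of $\Gm$ by a vector group $\Gamma_{\Psi_{(s)}}$. After invoking that theorem, the present plan is largely bookkeeping, and the only real obstacle is the torus verification in (iii) above, which passes cleanly via connectedness once Proposition~\ref{P:GammaMaTorus} is in hand.
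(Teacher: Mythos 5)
Your proposal is correct and follows essentially the same route as the paper: dispose of $q=2$ by reducing everything to $\tpi$, then apply Theorem~\ref{T:TransBases} with $M_0 = M_f$, using Proposition~\ref{P:GammaMaTorus} for the torus hypothesis and Remark~\ref{R:ZetaTransBasis} for the choice of $T$ containing the $\zeta_C(n)$, $n\in U(s)$. Your treatment is a bit more explicit than the paper's (spelling out the hypothesis checks and separating the lower-bound argument via the subset $\{\zeta_C(n)\}\cup S$ of $T\cup S$ from the elementary upper bound), but the decomposition and the key inputs are identical.
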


\begin{proof}
When $q=2$, the result is true, since each $\Gamma(r)$, $r \in k
\setminus A$, is a $\ok$-multiple of $\tpi$  and each $\zeta_C(n)$,
$n \geq 1$, is a $k$-multiple of $\tpi^n$.  When $q > 2$, we can
apply Theorem~\ref{T:TransBases}. Indeed, by
Proposition~\ref{P:GammaMaTorus} we can take $M_0 = M_f$, which
satisfies the hypotheses of Theorem \ref{T:TransBases}, and $S$ to
be a transcendence basis of $E_f$ over $\ok$; by
Remark~\ref{R:ZetaTransBasis}, there is a transcendence basis $T\cup
\{ \tpi \}$ for $\ok(\Psi_{(s)}(\theta))$ over $\ok$ with $T$
containing all $\zeta_C(n)$, $n \in U(s)$.  Since $\# S = 1 +
\frac{q-2}{q-1}\cdot \#(A/f)^\times$ and $\#U(s) = s - \lfloor s/p
\rfloor - \lfloor s/(q-1) \rfloor + \lfloor s/(p(q-1)) \rfloor$, the
result follows.
\end{proof}

\subsection*{Acknowledgement} The authors thank Dinesh Thakur for
many helpful discussions about the contents of this paper.

\bibliographystyle{amsplain}

\end{document}